\documentclass[11pt,leqno,twoside]{amsart}

\usepackage[margin=1.25in]{geometry}

\usepackage[normalem]{ulem}
\usepackage{amsmath,amscd,amssymb,amsfonts,latexsym,wasysym, mathrsfs, mathtools,hhline,color, tikz}
\usepackage[bb=boondox]{mathalpha}

\usepackage[all, cmtip]{xy}
\usepackage{csquotes}
\usepackage{url}
\usepackage{comment}
\usepackage[utf8]{inputenc}

\definecolor{hot}{RGB}{65,105,225}

\usepackage[pagebackref=true,colorlinks=true, linkcolor=hot ,  citecolor=hot, urlcolor=hot]{hyperref}
\usepackage{microtype,tikz-cd,enumitem}
\usepackage{cleveref}

\usepackage{geometry}
\usepackage{chngcntr}

\usepackage{geometry}
\theoremstyle{plain}

\newtheorem{theorem}{Theorem}[section]  
\newtheorem{lemma}[theorem]{Lemma}
 
\newtheorem{proposition}[theorem]{Proposition}
\newtheorem{corollary}[theorem]{Corollary}

\theoremstyle{definition}
\newtheorem{definition}[theorem]{Definition} 
\theoremstyle{remark}
\newtheorem{remark}[theorem]{Remark} 
\newtheorem{example}[theorem]{Example}

\newcommand{\Z}{\mathbb{Z}}
\newcommand{\C}{\mathbb{C}}

\newcommand{\Q}{\mathbb{Q}}
\newcommand{\cR}{\mathcal{R}}
\newcommand{\cV}{\mathcal{V}}
\newcommand{\orb}{\mathrm{orb}}
\newcommand{\ab}{\mathrm{ab}}

\newcommand{\bbmk}{\mathbb{k}}
\newcommand{\Char}{{\rm Char}}

\newcommand{\F}{\mathbb{F}}
\newcommand{\rank}{{\mathrm{rank}}}

\newcommand{\lcm}{{\mathrm{lcm}}}
\newcommand{\Hom}{{\mathrm{Hom}}}

\newcommand{\im}{\mathrm{im}}
\newcommand{\TC}{\mathrm{TC}}

\title[Quasi-projectivity of weighted right-angled Artin groups]{Quasi-projectivity of weighted right-angled Artin groups}
\author{Yongqiang Liu}
\address{Institute of Geometry and Physics, University of Science and Technology of China, Hefei 230026, P.R. China}
\email{liuyq@ustc.edu.cn}
\author{Xiaobin Xu}
\address{School of Mathematical Sciences, University of Science and Technology of China, 96 Jinzhai Road, Hefei, Anhui 230026, P.R. China}
\email{fx920704594@mail.ustc.edu.cn}

\date{\today}

\keywords{}

\subjclass[2010]{}

\begin{document}

\begin{abstract}
Weighted right-angled Artin groups generalize right-angled Artin groups by adding weights to the edges of the defining graphs. We classify the weighted right-angled Artin groups that occur as fundamental groups of smooth complex quasi-projective varieties. They are precisely the finite direct products of finitely generated free groups and groups of the form $\langle a,b\mid [a,b]^m=1\rangle$.
\end{abstract}

\maketitle


\section{Introduction}
\subsection{Serre's problem}
A finitely presented group is called \emph{projective} (resp. \emph{quasi-projective}) if it is the fundamental group of a connected smooth complex projective (resp. quasi-projective) variety. The problem of characterizing these groups goes back to Serre and remains widely open. See \cite{ABCKT,Py} for background in the compact K\"ahler setting. A natural approach is to fix a class of finitely presented groups and determine which of its members admit such geometric realizations.

The quasi-projective problem is generally harder to study because the noncompact setting allows phenomena excluded by compact K\"ahler geometry. For a smooth projective variety $X$, Hodge theory gives a pure Hodge structure of weight one on $H^1(X,\Q)$ and forces its dimension to be even. The formality theorem of Deligne, Griffiths, Morgan, and Sullivan supplies further restrictions on the rational homotopy type of $X$ \cite{DGMS}. For a smooth quasi-projective variety $X$, $H^1(X,\Q)$ may have both weight-one and weight-two parts. Such varieties need not be formal. Consequently, the constraints available in the projective setting do not extend unchanged to quasi-projective groups.

Morgan's work on the algebraic topology of smooth algebraic varieties provides a fundamental framework for the noncompact problem \cite{Mor,Mor2}. His mixed Hodge theory for minimal models imposes restrictions on the Malcev Lie algebras of fundamental groups. He also produced the first example of a finitely presented group that is not quasi-projective. Kapovich and Millson subsequently used representation varieties to prove that infinitely many pairwise non-isomorphic Artin groups can not be quasi-projective groups \cite{KM}.

Degree one characteristic varieties give particularly effective obstructions for quasi-projective groups. Beauville studied their structure for projective groups \cite{Bea}. Arapura  extended it to quasi-projective groups \cite{Ara97}. Artal Bartolo, Cogolludo-Agust\'in, and Matei further refined Arapura's results \cite{ACM13}. Dimca, Papadima, and Suciu developed the resulting isotropicity and resonance obstructions \cite{DPS}. These results connect the geometry of orbifold fibrations with invariants computable from a group presentation. Artin type groups are a natural class in which to apply these methods.


\subsection{Weighted right-angled Artin groups}
 Let us first recall the definition of right-angled Artin groups.
\begin{definition}\label{def:raag}
 Let \(\Gamma=(V,E)\) be a finite simple graph with $V$ the vertex set and $E$ the edge set. Its associated \emph{right-angled Artin group} is
\[
G_\Gamma=\langle v\in V\mid [v,w]=1\text{ for every }\{v,w\}\in E\rangle,
\]
where $[v,w]=vwv^{-1}w^{-1}$.
\end{definition}
The class of right-angled Artin groups contains finitely generated free groups and finitely
generated free abelian groups, and is closed under direct products. 
Dimca, Papadima, and Suciu obtained a complete solution of Serre's problem within this
class \cite[Theorem~11.7]{DPS}. Their result will be recalled below, immediately before
the statement of our main theorem.  Related realization problems have also been considered for Artin  
groups. For example, Blasco-Garc\'ia and Cogolludo-Agust\'in classified quasi-projective even
Artin groups \cite{BC}. 

Weighted right-angled Artin groups were introduced in \cite{LL} as a labeled-graph
generalization of right-angled Artin groups.
\begin{definition}\label{def:weighted}
 Let $\Gamma_\ell= (V, E, \ell)$ be a finite simple graph, with vertex set $V$, edge set $E$ and an edge weight function $\ell \colon E \to \Z_{>0}$.  The associated \emph{weighted right-angled Artin group} is
\[
G_{\Gamma_\ell}=\langle v\in V\mid [v,w]^{\ell(\{v,w\})}=1
\text{ for every edge }\{v,w\}\in E\rangle.
\]
An edge is called \emph{heavy} if its weight is greater than one. When all weights are one, $G_{\Gamma_\ell}$ is the right-angled Artin group $ G_\Gamma$ for the underlying graph $\Gamma$.
\end{definition}

\begin{example} \label{ex:join}
    Let $\Gamma_\ell= (V, E, \ell)$ and $\Gamma'_{\ell'}= (V', E', \ell')$ be two edge weighted graphs. Denote by $\Gamma_\ell * \Gamma'_{\ell'}$ their join, with  the vertex set $V\sqcup V'$, the edge set $E \sqcup E' \sqcup \{v,v'| v\in V, v'\in V'\}$ and label $1$ on each edge $\{v,v'\}$. Then $$ G_{\Gamma_\ell * \Gamma'_{\ell'}}= G_{\Gamma_\ell} \times G_{ \Gamma'_{\ell'}}.$$
    If $\Gamma_\ell$ is  discrete  (i.e., $E=\emptyset$), $G_{\Gamma_\ell}$ is a finitely generated free group. More
generally, if $\Gamma_\ell$ is complete multipartite (i.e., a finite join of discrete graphs), then $G_{\Gamma_\ell}$ is a finite direct product of finitely generated free groups.
\end{example}

Liu and Liu classified the K\"ahler weighted right-angled Artin groups and asked for a classification in the quasi-K\"ahler setting \cite[Question~1.8]{LL}. The present work gives a complete answer in the algebraic, quasi-projective category.
\subsection{Main results}
The following is the quasi-projective consequence of \cite[Theorem~11.7]{DPS} due to Dimca, Papadima and Suciu. 
\begin{theorem}\cite{DPS}\label{thm:DPS}
  Let $G_\Gamma$ be the right-angled Artin group associated to a finite simple graph
$\Gamma$.  The following conditions are equivalent:
\begin{enumerate}[label=(\roman*)]
\item $G_\Gamma$ is quasi-projective;
\item $G_\Gamma$ is a finite direct product of finitely generated free groups;
\item $\Gamma$ is complete multipartite, that is, a finite join of discrete graphs.
\end{enumerate}
\end{theorem}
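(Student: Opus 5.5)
The plan is to prove the cycle of implications (iii)$\Rightarrow$(ii)$\Rightarrow$(i)$\Rightarrow$(iii). The first two are essentially formal.

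For (iii)$\Rightarrow$(ii), apply Example~\ref{ex:join} with all weights equal to one. If $\Gamma=\Gamma_1*\cdots*\Gamma_k$ with each $\Gamma_i$ discrete on $n_i$ vertices, then $G_\Gamma\cong F_{n_1}\times\cdots\times F_{n_k}$.

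For (ii)$\Rightarrow$(i), it suffices to realize each factor. The group $F_n$ is the fundamental group of $\C$ minus $n$ points, which is smooth quasi-projective (for $n=0$ take a point). A product of smooth quasi-projective varieties is again smooth quasi-projective, and $\pi_1$ takes products to products. So $G_\Gamma$ is quasi-projective.

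The substance is (i)$\Rightarrow$(iii). Here I would use the degree one resonance and characteristic varieties, following \cite{DPS}. Right-angled Artin groups are $1$-formal, since the Salvetti complex is formal. Hence for a quasi-projective $G_\Gamma$ the tangent cone theorem applies: $\cR_1(G_\Gamma)$ coincides with the tangent cone of $\cV_1(G_\Gamma)$ at $1$. By Arapura's theorem, every irreducible component of $\cV_1$ through $1$ is then a subtorus pulled back from an orbifold fibration onto a curve. This yields the structural constraints of \cite{DPS}:
\begin{itemize}
\item every positive-dimensional irreducible component of $\cR_1$ is a linear subspace $L$ with $\dim L\ge 2$;
\item the cup product map restricted to $L$ is $0$-isotropic, i.e.\ $L\wedge L\to H^2$ has image of dimension $1$ exactly when $\dim L=2$ with a genus-one base, and $0$ otherwise;
\item distinct components meet only at $0$.
\end{itemize}

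Next I would use the explicit description of the resonance variety of a RAAG. We have $\cR_1(G_\Gamma)=\bigcup_W \C^W$, where $W$ ranges over vertex subsets whose induced subgraph $\Gamma_W$ is disconnected, and $\C^W$ is the coordinate subspace spanned by $W$. The cohomology ring is the exterior face ring of the flag complex of $\Gamma$.

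Now suppose $\Gamma$ is not complete multipartite. Then the complement graph $\bar\Gamma$ is not a disjoint union of cliques, so there are vertices $a,b,c$ with $\{a,b\},\{b,c\}\in E$ but $\{a,c\}\notin E$. Equivalently, $\Gamma$ contains an induced path $a-b-c$ (a $P_3$ inside $\bar\Gamma$ means an induced $K_1\sqcup K_2$ in $\Gamma$, and I would work with whichever configuration is more convenient). Take a maximal subset $W\supseteq\{a,c,\dots\}$ with $\Gamma_W$ disconnected that also forces an edge inside $W$. On the component $L=\C^W$, the cup product restricted to $L$ is nonzero because $W$ contains an edge, while $\dim L\ge 3$. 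Alternatively, two such coordinate components overlap in a nonzero subspace. Either way one of the constraints above fails. Concretely, for $\Gamma$ containing an induced $K_1\sqcup K_2$ on $\{a\}\sqcup\{b,c\}$, the subspace $\C^{\{a,b,c\}}$ lies in $\cR_1$ and has dimension $3$, yet $b\cup c\neq0$, contradicting isotropicity. One must check that this subspace is, or lies in, an irreducible component on which the contradiction persists.

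The main obstacle is exactly this last step. We need to identify a genuine irreducible component of $\cR_1(G_\Gamma)$ and show it violates isotropicity or the trivial-intersection property. I would first reduce to induced subgraphs: a retraction $G_\Gamma\to G_{\Gamma_W}$ is not quasi-projective-preserving, so instead I would work directly with the maximal disconnected subsets $W$ containing the bad configuration. Since this is precisely \cite[Theorem~11.7]{DPS}, the cleanest course is to cite it for this implication, and record the argument above only as the reasoning behind it.
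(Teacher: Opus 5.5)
\textbf{There is a genuine gap in (i)$\Rightarrow$(iii).} The implications (iii)$\Rightarrow$(ii)$\Rightarrow$(i) are fine. For (i)$\Rightarrow$(iii) you end by citing \cite{DPS}, which is also all the paper does for this statement. However, the reasoning you record contains two errors and stops exactly at the decisive step.

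First, an induced path on $a,b,c$ with middle vertex $b$ is not an obstruction: the star $K_{1,2}$ is complete bipartite. The correct criterion is that $\Gamma$ is complete multipartite if and only if it has no induced $K_1\sqcup K_2$, i.e.\ no $a,b,c$ with $\{b,c\}\in E$ and $\{a,b\},\{a,c\}\notin E$.

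Second, your isotropy dichotomy is misstated. By Theorem~\ref{thm:DPS-isotropic} and Proposition~\ref{prop orbifold group}, a positive-dimensional component through $\mathbf 1$ is either $0$-isotropic (non-compact base) or $1$-isotropic. A $1$-isotropic component comes from a compact base of genus $g\ge 2$, so $\dim L=2g\ge 4$ and the cup product is a \emph{nondegenerate} form with one-dimensional image. A genus-one base gives no such component at all. So ``$\dim\ge 3$ and $b^*\cup c^*\neq 0$'' is not by itself a contradiction, and $\C^{\{a,b,c\}}$ is in general not a component. The trivial-intersection property cannot substitute for this: for $\Gamma=K_1\sqcup K_2$, $\cR^1$ is all of $H^1$, a single component.

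\textbf{How to close the gap.} Enlarge $\{a,b,c\}$ to a maximal $W$ with $\Gamma(W)$ disconnected. By Theorem~\ref{thm RAAG}, $T_W$ is an irreducible component of $\cV^1(G_\Gamma,\C)$ through $\mathbf 1$ with tangent space $L_W$.
\begin{enumerate}
\item Since $b^*\cup c^*\neq 0$ on $L_W$, the subspace $L_W$ is not $0$-isotropic.
\item Cup products of distinct edges are linearly independent (Remark~\ref{rem cup}), so a one-dimensional image would force $\{b,c\}$ to be the only edge of $\Gamma(W)$.
\item In that case $a^*$ lies in the radical of the form, so $L_W$ is not $1$-isotropic either.
\end{enumerate}

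The paper's Lemma~\ref{lem:multipartite} closes the same step via \cite[Lemma~9.4]{DPS} instead. Write $W=W'\sqcup W''$ with $W'$ a connected component of $\Gamma(W)$. Then $L_{W'}\cup L_{W''}=0$, and the images of $\wedge^2L_{W'}$ and $\wedge^2L_{W''}$ are supported on disjoint edge sets. Hence the cup product on $L_W$ vanishes and $\Gamma(W)$ is discrete. Since every maximal disconnected induced subgraph is then discrete, $\Gamma$ is complete multipartite.

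Either closing works. Yours, combined with the $K_1\sqcup K_2$ criterion, skips the final combinatorial step. The paper's avoids using the precise shape of $1$-isotropic components.
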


For $r\geq1$, let $D_r$ denote the discrete graph on $r$ vertices. For $m\geq1$, let $S_m$ denote the graph with two vertices and one edge of weight $m$, and put
\[
Q_m=\langle a,b\mid [a,b]^m=1\rangle.
\]
Our main theorem gives the weighted extension of Theorem~\ref{thm:DPS}.
\begin{theorem}\label{thm:main}
 Let $G_{\Gamma_\ell}$ be the weighted right-angled Artin group associated to an
edge weighted finite graph $\Gamma_\ell$.  The following conditions are equivalent:
\begin{enumerate}[label=(\roman*)]
\item $G_{\Gamma_\ell}$ is quasi-projective;
\item $G_{\Gamma_\ell}$ is a finite product of groups of type $\F_r$ with $r\geq 1$ or $Q_m$ with $m\geq 1$;
\item $\Gamma_\ell$ is a finite join of graphs of the form $D_r$ or $S_m$.
\end{enumerate}
\end{theorem}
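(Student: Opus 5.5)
I will prove the cycle (iii)$\Rightarrow$(ii)$\Rightarrow$(i)$\Rightarrow$(iii).

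\textbf{The easy implications.} The implication (iii)$\Rightarrow$(ii) follows from Example~\ref{ex:join}: a join gives a direct product, $G_{D_r}=\F_r$, and $G_{S_m}=Q_m$.

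For (ii)$\Rightarrow$(i), quasi-projective groups are closed under finite direct products, since products of smooth quasi-projective varieties are smooth quasi-projective. It therefore suffices to realize each factor. The group $\F_r$ is $\pi_1$ of $\C$ minus $r$ points. For $Q_m$, I would observe that it is the orbifold fundamental group of a once-punctured elliptic curve whose puncture is replaced by an orbifold point of order $m$:
\[
\pi_1^{\orb}=\langle a,b,c\mid [a,b]c=1,\ c^m=1\rangle\cong Q_m .
\]
Such good orbifold groups are quasi-projective. One takes a finite Galois orbifold cover that is a genus $g$ curve $C$ with a free action of a finite group $H$. Then $Q_m=\pi_1(C\times_H \widetilde{Y})$, where $\widetilde{Y}$ is a simply connected smooth quasi-projective variety with a free algebraic $H$-action, for instance an open subset of a representation of $H$ with the bad locus removed (complement of codimension $\ge 2$, so it remains simply connected). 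This diagonal-quotient construction is standard, as in Serre's trick. I would write it carefully, checking that the quotient has $\pi_1 = \pi_1^{\orb}(C/H)$.

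\textbf{The hard implication (i)$\Rightarrow$(iii).} Let $\Gamma_\ell$ be arbitrary. Note that $H_1(G_{\Gamma_\ell})=\Z^{V}$ and that the relation $[v,w]^{\ell}$ lies in the commutator subgroup. I would first compute the first resonance variety $\cR_1$ via cup products $H^1\wedge H^1\to H^2$. Over $\Q$ the relation $[v,w]^\ell$ contributes the same cup-product relation $v\wedge w=0$ as the unweighted case, because rationally the $\ell$-th power is invisible in $H^2$ modulo torsion. Hence the resonance variety, and also the Malcev completion / 1-formality, should match those of the RAAG $G_\Gamma$ on the underlying unweighted graph.

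Dimca--Papadima--Suciu's argument for Theorem~\ref{thm:DPS} uses only resonance and the isotropicity/positive-dimensional-component constraints. Running it therefore forces $\Gamma$ to be complete multipartite. (I would verify that $G_{\Gamma_\ell}$ is 1-formal, or else work with characteristic varieties $\cV_1$ tangent cones instead.) So $\Gamma$ is a join of discrete graphs $D_{r_1}*\cdots*D_{r_k}$, and heavy edges can only join vertices in different parts.

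It remains to show that a heavy edge $\{v,w\}$ forces both parts to be singletons, i.e.\ that $\{v,w\}$ is a join factor $S_m$. This is the main obstacle, and it needs torsion-sensitive information from characteristic varieties, namely translated components. I would use the Arapura/Artal--Cogolludo--Matei structure theorem, under which positive-dimensional or translated components of $\cV_1$ come from orbifold fibrations, and torsion translates correspond to multiple fibers.

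Suppose $v\in V_1$ with $|V_1|\ge2$, say $v,v'\in V_1$, and $w\in V_2$ with $\ell(v,w)=m>1$. I would compute $\cV_1$ of the subgroup or quotient obtained by killing the other generators, which is a retract since WRAAGs on induced subgraphs are retracts. This gives the graph on $v,v',w$ with edges $vw$ (weight $m$) and $v'w$ (weight $\ell'$). In the Alexander module, the weighted relation contributes a factor like $(1-t_v)(1-t_w)\cdot \Phi_m(\ldots)$, producing torsion-translated subtori. I would show that the resulting configuration violates the ACM constraints: either a translated component meets a positive-dimensional component in a forbidden way, or the combinatorics are incompatible with a single orbifold pencil. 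Retracts preserve quasi-projectivity obstructions on characteristic varieties via naturality, so I would also need either that retracts of quasi-projective groups inherit the obstruction, or a direct computation inside $G_{\Gamma_\ell}$ restricted to the subtorus $\{t_u=1,\ u\ne v,v',w\}$.
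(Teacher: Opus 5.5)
Your easy directions are fine: realizing $Q_m$ as the orbifold group of a genus-one orbifold with one cone point, via a diagonal quotient, is a valid substitute for citing that $Q_m$ is projective. Your Step~1 is the paper's Step~1, and your 1-formality remark is correct, since a uniquely divisible nilpotent group cannot distinguish $[v,w]^m$ from $[v,w]$.

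\textbf{First gap: your invariant cannot see the weights.} The Fox derivative of $[u,v]^m$ is $(1+r+\cdots+r^{m-1})\,\partial r/\partial x$ with $r=[u,v]$. Every rank-one character sends $r$ to $1$, so the evaluated row is just $m$ times the unweighted row. There is no cyclotomic factor. Over $\C$ one gets $\cV^1(G_{\Gamma_\ell},\C)=\cV^1(G_\Gamma,\C)$ (Corollary~\ref{cor  char}), and this has no translated components at all. Since $G_\Gamma$ is quasi-projective for complete multipartite $\Gamma$, no comparison of complex $\cV^1$ with the Arapura/ACM structure theorem can exclude a heavy edge. For instance, the path $v-w-v'$ with $\ell(vw)=m$ has exactly the characteristic variety of $\F_2\times\Z$.

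The paper instead uses coefficients in a field $\bbmk$ of characteristic $p\mid m$, where the heavy row vanishes. Take the character with $\rho(u)=\lambda\neq1$, trivial on the other vertices. Fox calculus on the quotient onto the subgraph on $W\cup\{v\}$ gives $\dim H^1\ge r=|W|$. On the other hand, the orbifold fibration realizing $T_W$ has non-compact base, no multiple fibers, and finitely generated kernel (Theorem~\ref{thm finitely generated}). The Hochschild--Serre sequence then gives $\dim H^1=r-1$ for generic $\lambda$, a contradiction. This is also the right way to use your retract: only as a lower bound for twisted $H^1$ via Lemma~\ref{lem: funct}, with no geometric property of the retract needed.

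\textbf{Second gap: your reduction is incomplete.} The step ``both parts are singletons, i.e.\ $\{v,w\}$ is a join factor $S_m$'' is false. Take the triangle on $u,v,w$ with $\ell(uv)=\ell(uw)=m>1$ and $\ell(vw)=1$. Every block is a singleton, yet the graph is not a join of $D_r$'s and $S_m$'s. Here the graph is complete, so $\cV^1(G,\C)=\{\mathbf 1\}$. There is no positive-dimensional complex component, hence no orbifold fibration against which to run any comparison, mod $p$ or otherwise.

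The paper handles this case with a separate idea (Step~3):
\begin{enumerate}
\item Pass to the index-two subgroup $\ker\alpha$, where $\alpha(u)=\alpha(v)=1$ and $\alpha$ is trivial on the other vertices; its first cohomology is unchanged.
\item The double cover of $Q_m$ is a genus-one orbifold group with two cone points of order $m$. Its translated components pull back to $\ker\alpha$ and must come from a fibration onto an elliptic curve.
\item Hence $\operatorname{span}_\Q\{u^*,v^*\}$ is pure of weight one, and likewise $\operatorname{span}_\Q\{u^*,w^*\}$.
\item Their intersection $\Q u^*$ would then be a one-dimensional pure Hodge structure of weight one, which is impossible.
\end{enumerate}
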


The classification also identifies the arrangement groups in this class.
\begin{corollary}[see Corollary~\ref{cor arrangement}]\label{cor:intro-arrangement}
For a weighted right-angled Artin group $G_{\Gamma_\ell}$, the following conditions are equivalent:
\begin{enumerate}[label=(\roman*)]
\item $G_{\Gamma_\ell}$ is the fundamental group of a complex hyperplane arrangement complement;
\item $G_{\Gamma_\ell}$ is a finite direct product of finitely generated free groups;
\item $\Gamma$ is complete multipartite and every edge has weight one.
\end{enumerate}
\end{corollary}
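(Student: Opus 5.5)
The statement to prove is Corollary~\ref{cor:intro-arrangement}, and I would derive it from Theorem~\ref{thm:main} together with Theorem~\ref{thm:DPS}, checking the implications (iii)$\Rightarrow$(ii)$\Rightarrow$(i)$\Rightarrow$(iii).

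\textbf{(iii)$\Rightarrow$(ii).} If every weight is one, then $G_{\Gamma_\ell}=G_\Gamma$ is a right-angled Artin group. A complete multipartite graph is a join of discrete graphs, so Example~\ref{ex:join} gives a direct product of free groups.

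\textbf{(ii)$\Rightarrow$(i).} The group $\F_r$ is the fundamental group of the complement of $r$ lines through the origin in $\C^2$. Equivalently, take $\C\setminus\{r \text{ points}\}\times\C$, realised as a central arrangement of $r+1$ lines, or simply the $r$ affine lines $x=c_i$ in $\C^2$. A product of arrangement complements is the complement of the product arrangement in the product space. Hence $\F_{r_1}\times\cdots\times\F_{r_k}$ is an arrangement group. If some $r_i=0$, that factor is trivial and can be dropped.

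\textbf{(i)$\Rightarrow$(iii).} An arrangement complement is a smooth quasi-projective variety. By Theorem~\ref{thm:main}, $\Gamma_\ell$ is therefore a join of graphs $D_r$ and $S_m$, and $G_{\Gamma_\ell}$ is a product of groups $\F_r$ and $Q_m$. It remains to exclude factors $Q_m$ with $m\ge2$. (When $m=1$, $S_1$ is the join $D_1*D_1$, which is already allowed.)

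For this exclusion I would use a known property of arrangement groups: $H^1$ of the complement generates the cohomology, and $H^*$ is torsion-free (Orlik--Solomon). More directly, one can use that $H_2(G;\Z)$ is torsion-free for such groups. Concretely, arrangement groups have torsion-free abelianization, and the complement has torsion-free homology. The Hopf formula then shows that $H_2(G)$ is a quotient of $H_2(X)$, and this quotient is torsion-free because the cup product $\wedge^2H^1\to H^2$ is onto.

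For $Q_m$ with $m\ge2$, consider the $2$-dimensional presentation complex of $\langle a,b\mid [a,b]^m\rangle$. This is a one-relator group with torsion, and the relator is a proper power. By Lyndon's theorem, $H_2(Q_m)=0$, while $H^2(Q_m;\Z)$ is computed from the relator $[a,b]^m$, which has zero exponent sum. So rationally $H^2(Q_m;\Q)=0$, since the relator module is generated by the power. Hence the cup product $H^1\wedge H^1\to H^2$ vanishes rationally for $Q_m$. For $\Z^2$ it would be nonzero.

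The cleaner obstruction therefore uses \emph{formality/1-formality}: arrangement groups are $1$-formal, and their rational cohomology ring is generated in degree one. I would show that $G_{\Gamma_\ell}$ with a $Q_m$ factor ($m\ge2$) violates this. The holonomy Lie algebra of $Q_m$ is that of $\F_2$, because the relator $[a,b]^m$ is trivial in degree two only after accounting for the $m$-fold power. Rationally, $[a,b]^m$ gives the quadratic relation $m[a,b]$, so the holonomy algebra equals $\mathbb{Q}^2$ abelian. Meanwhile $Q_m$ has infinite-order torsion-free-rank behaviour: it contains $\F_\infty$-type commutator subgroups and is virtually a surface-like group. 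Its Malcev Lie algebra is not abelian, because $Q_m$ is not virtually abelian.

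This mismatch means $Q_m$ is not $1$-formal, and this is the main obstacle to nail down. I would first try the resonance/characteristic variety comparison. For arrangement groups the tangent cone theorem holds, $\TC_1(\cV_1)=\cR_1$. For $Q_m$, the resonance variety computed from the abelian holonomy is $\{0\}$. The characteristic variety, however, contains positive-dimensional or translated components coming from the orbifold fibration realizing $Q_m$: the fundamental group of $\C^*$-bundles over an orbifold with a point of order $m$. These components should contradict the tangent-cone property unless $m=1$. The product structure respects both varieties factorwise, so a single bad $Q_m$ factor suffices to rule out (i).
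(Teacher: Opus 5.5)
Your reduction of (i)$\Rightarrow$(iii) via Theorem~\ref{thm:main} to excluding factors $Q_m$ with $m\ge 2$ is fine, and so are (iii)$\Rightarrow$(ii)$\Rightarrow$(i). One slip there: $r$ lines through the origin in $\C^2$ give $\Z\times\F_{r-1}$, not $\F_r$; your $r$ parallel lines, or $r$ points in $\C$, are the right realization.

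The exclusion step, however, has a genuine gap. The facts you rely on are false, and the invariants you fall back on cannot work.
\begin{enumerate}
\item Cohomology and cup product: $H_2(Q_m;\Z)\cong\Z$, not $0$. For a one-relator group, Lyndon's theorem gives $H_2\cong\Z$ exactly when the relator lies in $[F,F]$. By Remark~\ref{rem cup}, $a^*\cup b^*$ evaluates to $m$ on the $2$-cell, so the rational cup product $\wedge^2H^1(Q_m,\Q)\to H^2(Q_m,\Q)\cong\Q$ is nonzero. Your own holonomy computation, which yields the abelian Lie algebra $\Q^2$, presupposes exactly this.
\item $1$-formality: since $[a,b]^m=1$, every torsion-free nilpotent quotient of $Q_m$ factors through $\Z^2$. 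So the Malcev Lie algebra is abelian and agrees with the completed holonomy Lie algebra. Thus $Q_m$ is $1$-formal, as it must be, being projective.
\item Jump loci: $\cV^1(Q_m,\C)=\{\mathbf{1}\}$ and $\cR^1(Q_m,\C)=\{0\}$, by Proposition~\ref{prop orbifold group}(1) (genus one, one cone point) or Corollary~\ref{cor  char}. So there are no translated or positive-dimensional components, and the tangent cone theorem holds with equality.
\end{enumerate}
In short, every invariant you invoke agrees for $Q_m$ and for $Q_1=\Z^2=\pi_1(\C^*\times\C^*)$, which is itself an arrangement group. None of them can rule out $Q_m$.

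What is missing is an invariant that actually sees the weight. The paper uses mixed Hodge theory. An arrangement complement has $W_1H^1(X,\Q)=0$. On the other hand, Lemma~\ref{lem Hodge} shows that in any quasi-projective realization a heavy edge $\{u,v\}$ makes $\operatorname{span}_\Q\{u^*,v^*\}$ pure of weight one. That lemma passes to the index-two subgroup, whose quotient $Q'$ has translated components $T_\lambda$ forcing an elliptic orbifold fibration. It then uses Lemma~\ref{lem iso} to identify $H^1(X,\Q)$ with $H^1$ of the double cover as mixed Hodge structures.

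Alternatively, your opening Orlik--Solomon idea can be repaired if you work integrally rather than rationally:
\begin{enumerate}
\item $H^2(Q_m;\Z)=\Z e^*$ with $a^*\cup b^*=\pm m\,e^*$, so for $m\ge2$ the class $e^*$ is not a sum of cup products of degree-one classes.
\item Take the retraction $p\colon G\to Q_m$ and its section $s\colon a\mapsto u$, $b\mapsto v$. Applying $s^*$ shows that $p^*e^*\in H^2(G;\Z)$ is not such a sum either.
\item The classifying map $X\to K(G,1)$ induces a ring map that is an isomorphism on $H^1(-;\Z)$ and injective on $H^2(-;\Z)$. Since $H^*(X;\Z)$ is generated in degree one by the Orlik--Solomon theorem, $p^*e^*$ would have to be such a sum, which is a contradiction.
\end{enumerate}
This route is more elementary but specific to arrangement complements. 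The Hodge argument additionally yields the equivalence with condition (ii) of Corollary~\ref{cor arrangement}.
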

Thus, among weighted right-angled Artin groups, the arrangement groups are exactly the
ordinary right-angled Artin groups occurring in \Cref{thm:DPS}. 
 By contrast, the projective groups in the weighted class are precisely the finite products of the groups $Q_m$, see Corollary~\ref{cor projective}. This agrees with the K\"ahler classification in \cite{LL}. 

\subsection{Outline of the proof  and comparison with earlier work}
The realization direction is straightforward. 
 Both $\F_r$ and $Q_m$ are quasi-projective, and quasi-projective groups are closed under
finite direct products.  The necessity of the graph condition is proved in three steps.
\begin{enumerate}
    \item[Step 1. ]  Following the proof of \Cref{thm:DPS} in \cite{DPS},  we use complex characteristic variety to show that the underlying graph of a quasi-projective weighted right-angled Artin group must  be complete multipartite.
    \item[Step 2. ]  We use positive
characteristic  jump loci to show that a heavy edge can not meet a multipartite block containing at
least two vertices.
    \item[Step 3. ]  Using the technique of double cover, we show that two heavy edges can not share a vertex.
\end{enumerate}


Step 2 is the main new ingredient. The weights of $\Gamma_\ell$ change the realization problem in a way that ordinary complex jump loci do not detect. We instead choose a coefficient field of positive characteristic dividing the
weight of the edge. Then  Fox calculus yields a lower bound on twisted first cohomology  that exceeds the generic dimension supplied by the Hochschild–Serre sequence.   

These three steps yield exactly the decomposition in
\Cref{thm:main}. They also explain how several different tools are used. The
ordinary complex jump loci determine the complete multipartite structure, positive
characteristic jump loci detect the weights, and the double cover Hodge argument controls how the heavy edges may intersect.

This approach differs from the proof of the K\"ahler classification in \cite{LL}, which uses integral
homology jump loci, tropical varieties, and Bieri–Neumann–Strebel–Renz invariants. Indeed, our approach  gives an alternative proof of the K\"ahler classification, see Remark \ref{rem:kahler}. 
\subsection{Organization}
This paper is organized as follows. In \cref{pre}, we recall the required background on
characteristic and resonance varieties, compute these invariants for weighted right-angled
Artin groups, and review the structure of degree one jump loci of smooth
quasi-projective varieties in terms of orbifold fibrations.  Section \ref{pfmain}  is devoted to the 
proof of \Cref{thm:main}. Its first three subsections correspond to the three steps
described above, and the final subsection assembles them and derives the applications to
arrangement groups and projective groups.

\subsection*{Acknowledgments}
Both authors are supported by NSFC grant No.~12571047 and the Project of Stable Support for Youth Team in Basic Research Field, CAS (YSBR-001).

\subsection*{AI disclosure}
 During the preparation of this work, the authors used AI for the purpose of language polishing and checking the correctness of the proofs to improve readability.   All mathematical results, claims, and conclusions presented in this paper have been rigorously proved by the authors without the assistance of artificial intelligence. The authors take full responsibility for the content and integrity of this work.

\section{Preliminaries}\label{pre}

\subsection{Characteristic varieties and resonance varieties} 
Let $X$ be a connected finite CW-complex with $\pi_1(X)=G$. 
Fix an algebraically closed field $\bbmk$. The group of $\bbmk$-valued characters, $\mathrm{Char}(G,\bbmk) \coloneqq\mathrm{Hom}(G, \bbmk^*)$, is a commutative affine algebraic group. Since $\bbmk^*$ is abelian, we have $\mathrm{Hom}(G, \bbmk^*)\cong \mathrm{Hom}(\ab(G),\bbmk^*)$, where $\ab(G)$ denotes the abelianization of $G$. We always assume that the free abelian part of $ \ab(G)$ has positive rank.
 Each character $\rho \in \mathrm{Char}(G, \bbmk)$ defines a rank one local system on $X$, denoted by $L_{\rho}$.  
\begin{definition}
The degree one characteristic variety of $X$ with $\bbmk$-coefficients is the set
    \begin{equation*}
    \cV^1(X,\bbmk) \coloneqq \{\rho \in \mathrm{Char}(G,\bbmk) \mid H^1(X, L_\rho) \neq 0\}.
    \end{equation*}
\end{definition}
Since the characteristic variety relies only on the fundamental group $G$, we also denote it as $\cV^1(G,\bbmk)$. 
One may use Fox calculus to compute $\cV^1(G,\bbmk)$ explicitly as follows. Given a presentation $G=\langle x_1,\dots,x_n | r_1, \dots, r_m \rangle$,
there are linear operators $\dfrac{\partial}{\partial x_j}$ for $1\leq j\leq n$, uniquely defined by the following rules:
\begin{center}
$ \frac{\partial 1}{\partial x_j}=0$,
 $ \frac{\partial x_i}{\partial x_j}= \delta_{ij}$, and $  \frac{\partial g_1 g_2}{\partial x_j}= \frac{\partial g_1}{\partial x_j}+g_1 \frac{\partial g_2}{\partial x_j}$.     
\end{center}
Then 
one associates the \emph{Alexander matrix}:
\begin{equation*}
A(\rho) = \left( \rho ( \frac{\partial r_i}{\partial x_j} )\right)_{1 \leqslant i \leqslant m, \; 1 \leqslant j \leqslant n}.
\end{equation*}

\begin{proposition}\cite[Corollary~2.4.3]{Hir}
With the above notations and assumptions, the characteristic varieties $\cV^1(G,\bbmk)$ can be computed by the Alexander matrix as follows:
    \begin{equation*}
    \cV^1(G,\bbmk) = \{\rho \in \mathrm{Char}(G,\bbmk) \mid \rank(A(\rho)) < n-1\} \cup \{\mathbf{1}\},
    \end{equation*}
    where $\mathbf{1}$ denotes the constant sheaf with $\bbmk$-coefficients of $G$.
\end{proposition}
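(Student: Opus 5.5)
The plan is to compute $H^1(X,L_\rho)$ from the cellular cochain complex of the universal abelian cover, or equivalently to compute twisted group cohomology from the standard resolution coming from the presentation. We may assume $X$ is the presentation 2-complex of $G=\langle x_1,\dots,x_n\mid r_1,\dots,r_m\rangle$. This is harmless: $H^1$ with local coefficients depends only on $\pi_1$, because attaching cells of dimension $\ge 3$ does not change $H^1$, and $H^1(X,L_\rho)\cong H^1(G,\bbmk_\rho)$. The twisted cochain complex then reads
\[
\bbmk \xrightarrow{\ d^0\ } \bbmk^n \xrightarrow{\ d^1\ } \bbmk^m .
\]
Here $d^0$ is given by the column vector $(\rho(x_j)-1)_j$. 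By the fundamental formula of Fox calculus, $d^1$ is given by the Alexander matrix $A(\rho)$, up to the transpose and left/right convention. So $\dim H^1(X,L_\rho)=\dim\ker A(\rho)-\rank d^0$, with the kernel taken in the appropriate direction. I will check the sign and involution convention once, since $\rho$ versus $\rho\circ\iota$ does not affect ranks: applying $\rho$ to the inverse element only changes entries by units and a conjugation that preserves rank.

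There are two cases. If $\rho\neq\mathbf 1$, then some $\rho(x_j)\neq 1$, because the $x_j$ generate $G$. Hence $d^0$ is injective and has rank $1$. Then
\[
\dim H^1=(n-\rank A(\rho))-1,
\]
which is nonzero exactly when $\rank A(\rho)<n-1$. If $\rho=\mathbf 1$, then $H^1(X,\bbmk)=\Hom(G,\bbmk)$. This is nonzero because the free abelian part of $\ab(G)$ has positive rank by our standing assumption. So $\mathbf 1\in\cV^1$, which accounts for the extra point in the union. Finally, the identity $d^1\circ d^0=0$ gives $\rank A(\rho)\le n-1$ for nontrivial $\rho$. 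Combining the two cases yields the stated equality.

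The main obstacle is justifying that the matrix of $d^1$ is exactly $\rho(\partial r_i/\partial x_j)$. I will do this through the cellular chain complex of the universal cover $\widetilde X$. Each 2-cell boundary lifts to $\sum_j (\partial r_i/\partial x_j)\,\tilde e_j$ in $\Z G\otimes$ the 1-cells. This follows by induction on word length from the product rule $\partial(g_1g_2)=\partial g_1+g_1\partial g_2$, which matches how the lift of a concatenated path is obtained by translating the second path by $g_1$. Tensoring with $\bbmk_\rho$ and dualizing then gives the cochain complex above.
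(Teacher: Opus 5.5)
The paper gives no proof of this statement; it is quoted from Hironaka. Your argument is the standard one behind that citation, and its main line is correct. It passes to the presentation $2$-complex, lifts $2$-cell boundaries via the Fox product rule, counts $\dim H^1=n-\rank A(\rho)-1$ for $\rho\neq\mathbf 1$, and handles $\mathbf 1$ by the standing positive-rank assumption.

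One claim in it is false, and you cannot use it as a fallback. Replacing $\rho$ by $\rho\circ\iota$, i.e.\ $A(\rho)$ by $A(\rho^{-1})$, does \emph{not} preserve rank in general. Take $G=\langle a,t\mid tat^{-1}a^{-2}\rangle$. Then $\ab(G)=\Z$, characters are $\rho(t)=s$, $\rho(a)=1$, and $A(\rho)=(0,\ s-2)$. So over $\C$ the right-hand side of the proposition is $\{s=2\}\cup\{\mathbf 1\}$, whereas the version with $\rho^{-1}$ gives $\{s=1/2\}\cup\{\mathbf 1\}$. Complex conjugation rescues the symmetry only for unitary characters over $\C$; it fails for non-unitary characters and in positive characteristic.

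So the statement depends on the convention, and you must fix the convention rather than argue it away. Define $H^*(X,L_\rho)$ as the cohomology of $\Hom_{\Z G}(C_*(\widetilde X),\bbmk_\rho)$. Here $G$ acts on the left on $C_*(\widetilde X)$ by deck transformations, normalized so that the lift of $g_1g_2$ is the lift of $g_1$ followed by $g_1$ times the lift of $g_2$. It acts on $\bbmk_\rho$ through $\rho$. Then $\phi(g\tilde e)=\rho(g)\phi(\tilde e)$, so $d^0$ is $c\mapsto(\rho(x_j)-1)_j\,c$ and $d^1$ is literally $v\mapsto A(\rho)v$, with no involution and no transpose.

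Equivalently, $H^1(G,\bbmk_\rho)$ is the space of crossed homomorphisms $\delta(gh)=\delta(g)+\rho(g)\delta(h)$ modulo principal ones. In the example above, $\delta(tat^{-1}a^{-2})=(s-2)\delta(a)$, so $H^1\neq0$ exactly at $s=2$ (for $s\neq1$), which matches $A(\rho)$.

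Be similarly careful with your phrase ``tensoring with $\bbmk_\rho$ and dualizing.'' If you turn $C_*(\widetilde X)$ into a right module via $c\cdot g=g^{-1}c$ before tensoring, the inverse character appears. It is cleaner to work with $\Hom_{\Z G}$ directly. Once the convention is pinned down this way, the rest of your proof goes through as written.
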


 Every group homomorphism $\varphi\colon G\to Q$ 
induces a morphism between character groups, 
$\hat \varphi\colon \Char(Q,\bbmk) \to \Char(G,\bbmk)$,  
given by $\hat \varphi (\rho)(g)=\rho(\varphi(g))$ for any $g\in G$.  
We list two folklore results that will be used later. 
\begin{lemma} \label{lem from char to H^1} For a group epimorphism $\varphi\colon G \twoheadrightarrow Q$ between finitely presented groups,  consider the two induced injective maps $\hat \varphi\colon \Char(Q,\C) \to \Char(G,\C)$ and $\varphi^* \colon H^1(Q,\C) \to H^1(G,\C) $. Then we have  
   $$\TC_1 (\im \hat  \varphi) = \im \varphi^* ,$$                                                         where $\TC_1$ denotes the tangent cone at the identity element. 
\end{lemma}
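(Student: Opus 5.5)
Both sides of the identity are computed from the abelianization. Since $\C^*$ is abelian, every character factors through $\ab$, and $\varphi$ induces a surjection $\ab(\varphi)\colon \ab(G)\twoheadrightarrow \ab(Q)$. Likewise $H^1(-,\C)=\Hom(\ab(-),\C)$, so $\varphi^*$ is the dual of $\ab(\varphi)\otimes\C$. Hence I reduce to finitely generated abelian groups $A=\ab(G)$ and $B=\ab(Q)$ with a surjection $\psi\colon A\twoheadrightarrow B$ and kernel $K$. Then $\im\hat\varphi=\{\rho\in\Hom(A,\C^*)\mid \rho|_K=1\}$, which is a Zariski-closed algebraic subgroup of $\Char(G,\C)$. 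On the other side, $\im\varphi^*=\{\alpha\in\Hom(A,\C)\mid \alpha|_K=0\}$, which is the annihilator of $K\otimes\C$.

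Next I identify the tangent space at $\mathbf{1}$. The identity component of $\Char(G,\C)$ is the torus $\Hom(A/\mathrm{Tors},\C^*)$. Its tangent space at $\mathbf 1$ is $\Hom(A,\C)=H^1(G,\C)$, and the exponential map $\alpha\mapsto \exp\circ\alpha$ gives the identification. An algebraic subgroup $H$ of a diagonalizable group has tangent cone at $\mathbf 1$ equal to its Lie algebra, because $H$ is smooth. This Lie algebra is the tangent space of $H^0$ (the identity component), a subtorus of $\Char(G,\C)^0$. So $\TC_1(\im\hat\varphi)=\mathrm{Lie}(\im\hat\varphi)$.

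The key step is to compute this Lie algebra:
\[
\mathrm{Lie}\{\rho\mid\rho|_K=1\}=\{\alpha\in\Hom(A,\C)\mid \alpha|_K=0\}.
\]
For the inclusion "$\supseteq$": if $\alpha|_K=0$, then $t\mapsto\exp(t\alpha)$ is a one-parameter subgroup lying in $\im\hat\varphi$, since $\exp(t\alpha(k))=1$ for $k\in K$. Its derivative is $\alpha$. For the inclusion "$\subseteq$": the subgroup is cut out by the equations $\chi_k(\rho)=\rho(k)=1$ for $k\in K$, where $\chi_k$ is a character of the torus. The differential of $\chi_k$ at $\mathbf 1$ is evaluation at $k$. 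Any tangent vector must be killed by these differentials, so $\alpha(k)=0$. For torsion $k$ this condition is automatic. Because the subgroup is smooth, the tangent cone equals the Zariski tangent space, and this gives the inclusion.

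The two sides therefore coincide. The only delicate point is justifying that the tangent cone of the possibly disconnected group $\im\hat\varphi$ equals the Lie algebra of its identity component. I would settle this by noting that near $\mathbf 1$ only $H^0$ is present, since the other components are translates that do not contain $\mathbf 1$. Also $H^0$ is a subtorus, so it is smooth.
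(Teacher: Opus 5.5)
Your proposal is correct. The paper gives no proof to compare it with: it lists this lemma as one of two ``folklore results'' and states it without argument.

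Your argument supplies exactly the standard justification the paper takes for granted. You pass to the surjection $\ab(G)\twoheadrightarrow\ab(Q)$ with kernel $K$. You identify $\im\hat\varphi$ with $\{\rho\mid\rho|_K=1\}$ and $\im\varphi^*$ with the annihilator of $K$. You then get $\supseteq$ from the one-parameter subgroups $t\mapsto\exp(t\alpha)$ and $\subseteq$ from the differentials of the characters $\rho\mapsto\rho(k)$ for $k\in K$.

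One small simplification: the appeal to smoothness of algebraic groups in characteristic zero is not actually needed. Let $f$ vanish on $\im\hat\varphi$ and let $f_d$ be its lowest-degree form at $\mathbf 1$. If $\alpha|_K=0$, the lowest-order term of $f(\exp(t\alpha))$ is $t^d f_d(\alpha)$, so $f_d(\alpha)=0$ and $\alpha$ lies in the tangent cone. The tangent cone always sits inside the Zariski tangent space, which your differential argument places inside the annihilator. So the chain
\[
\{\alpha\mid\alpha|_K=0\}\subseteq\TC_1(\im\hat\varphi)\subseteq T_{\mathbf 1}(\im\hat\varphi)\subseteq\{\alpha\mid\alpha|_K=0\}
\]
closes on its own.
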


\begin{lemma} \label{lem from char to group}
Assume that $ \varphi_i\colon G\twoheadrightarrow Q_i$ for $i=1,2$ are two group epimorphisms  between finitely generated free abelian groups. 
    Then the  two induced embeddings $\hat \varphi_i\colon \Char(Q_i,\C) \to \Char(G,\C)$ for $i=1,2$ have the same image, if and only if, there exists a group isomorphism $\phi \colon Q_1\to Q_2 $ such that $\varphi_2=\phi \circ \varphi_1$. In that case, the images of the two induced injective maps $\varphi_i^*\colon \Hom(Q_i,\Q) \to \Hom(G,\Q) $ for $i=1,2$ coincide. 
\end{lemma}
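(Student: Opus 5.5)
Since $Q_1,Q_2$ and $G$ are finitely generated free abelian, the plan is to work entirely with tori and lattices. Put $G\cong\Z^n$. A character of $Q_i$ pulled back along $\varphi_i$ is a character of $G$ that is trivial on $K_i:=\ker\varphi_i$. Conversely, because $\varphi_i$ is surjective, every character of $G$ trivial on $K_i$ factors uniquely through $Q_i$. This gives
\[
\im\hat\varphi_i=\{\rho\in\Char(G,\C)\mid \rho|_{K_i}\equiv 1\}=K_i^{\perp}.
\]
The first part of the statement is therefore equivalent to: $K_1^\perp=K_2^\perp$ if and only if $K_1=K_2$. The reverse implication of the lemma is immediate. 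If $\varphi_2=\phi\circ\varphi_1$ with $\phi$ an isomorphism, then $K_1=K_2$. Equivalently, $\hat\varphi_2=\hat\varphi_1\circ\hat\phi$ with $\hat\phi$ bijective, so the images agree.

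For the forward implication, I would use Pontryagin-type duality between subgroups of $\Z^n$ and closed algebraic subgroups of $(\C^*)^n$: for any subgroup $K\subseteq\Z^n$,
\[
(K^\perp)^\perp:=\{g\in G\mid \rho(g)=1\ \text{for all }\rho\in K^\perp\}=K.
\]
To prove this, take $g\notin K$. It suffices to produce a character of $G/K$ that is nontrivial on the image $\bar g$ of $g$. Since $\C^*$ is a divisible group, it is an injective $\Z$-module. So we first define a nontrivial character on the cyclic group $\langle\bar g\rangle$: if $\bar g$ has infinite order, send it to any non-root of unity; if it has finite order $d>1$, send it to a primitive $d$-th root of unity. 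This character then extends to all of $G/K$. Hence $K_1^\perp=K_2^\perp$ implies $K_1=K_2$.

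Given $K_1=K_2=:K$, both $\varphi_1$ and $\varphi_2$ induce isomorphisms $\bar\varphi_i\colon G/K\to Q_i$. Setting $\phi=\bar\varphi_2\circ\bar\varphi_1^{-1}$ gives the required isomorphism with $\varphi_2=\phi\circ\varphi_1$.

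For the final claim, suppose $\varphi_2=\phi\circ\varphi_1$. Then $\varphi_2^*=\varphi_1^*\circ\phi^*$, and $\phi^*\colon\Hom(Q_2,\Q)\to\Hom(Q_1,\Q)$ is bijective, so $\im\varphi_1^*=\im\varphi_2^*$. Concretely, both images equal the set of rational homomorphisms $G\to\Q$ that vanish on $K$.

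The only step with real content is the double-perp identity $(K^\perp)^\perp=K$. The key point there is that the torsion in $G/K$ is still detected, which is why we need the roots-of-unity characters. Those characters are available precisely because the field is $\C$, which has characteristic zero and is algebraically closed.
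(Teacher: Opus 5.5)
Your argument is correct and is essentially the paper's: you identify $\im\hat\varphi_i$ with the characters trivial on $\ker\varphi_i$, use that characters separate points to get $\ker\varphi_1=\ker\varphi_2$, and pass to the common quotient $G/\ker\varphi_1$; you simply prove the separation step explicitly, and your torsion case is vacuous here because $G/\ker\varphi_i\cong Q_i$ is free. The only difference is the final claim, which you get directly from $\varphi_2^*=\varphi_1^*\circ\phi^*$, whereas the paper derives it from the preceding tangent-cone lemma; both work, and yours is more elementary.
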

\begin{proof} The second claim follows from Lemma \ref{lem from char to H^1}. For the first claim, we only need to prove the only if part. Note that $\im \hat  \varphi_i = \{\rho \in \mathrm{Char}(G,\C) \mid \rho|_{\mathrm{ker \varphi_i}} = 1\}$. It suffices to show that the two induced embeddings $\hat \varphi_i\colon \Char(Q,\C) \to \Char(G,\C)$ for $i=1,2$ have the same image only if $\mathrm{ker}\varphi_1=\mathrm{ker}\varphi_2$. 
Characters of a free
abelian group separate its elements. Thus equality of the character images implies equality
of the kernels, and the required isomorphism follows by passing to the common quotient of $G$.
\end{proof}

The next lemma indicates a functoriality property for the characteristic varieties of groups.
\begin{lemma} \cite[Corollary~A.1]{Suc14B}
\label{lem: funct}
Let $\varphi\colon G \twoheadrightarrow Q$ be an epimorphism 
from a finitely generated group $G$ to a group $Q$. For any $\rho \in \Char(Q,\bbmk) $,
we have an injective map $$ H^1(Q,L_\rho) \hookrightarrow H^1(G,L_{\rho\cdot \varphi}).$$
In particular, the induced monomorphism between character groups, 
$\hat\varphi\colon \Char(Q,\bbmk) \to \Char(G,\bbmk)$, restricts to an embedding 
$\cV^1(Q,\bbmk) \hookrightarrow \cV^1(G,\bbmk)$.
\end{lemma}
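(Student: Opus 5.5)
The plan is to identify $H^1(Q,L_\rho)$ and $H^1(G,L_{\rho\cdot\varphi})$ with crossed homomorphisms modulo principal ones, and to show that pulling back along $\varphi$ gives an injection. Recall that $H^1(G,L_\chi)=Z^1(G,\bbmk_\chi)/B^1(G,\bbmk_\chi)$. Here $Z^1$ consists of maps $f\colon G\to\bbmk$ with $f(gh)=f(g)+\chi(g)f(h)$, and $B^1$ consists of the maps $g\mapsto(\chi(g)-1)c$ for $c\in\bbmk$. For a cocycle $f\in Z^1(Q,\bbmk_\rho)$, the composite $f\circ\varphi$ satisfies the cocycle identity for the character $\rho\cdot\varphi=\rho\circ\varphi$. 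Pullback also sends coboundaries to coboundaries with the same constant $c$. Hence $f\mapsto f\circ\varphi$ induces a well-defined linear map $H^1(Q,L_\rho)\to H^1(G,L_{\rho\cdot\varphi})$.

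For injectivity, suppose $f\circ\varphi=\delta c$, that is, $f(\varphi(g))=(\rho(\varphi(g))-1)c$ for all $g\in G$. Since $\varphi$ is surjective, every $q\in Q$ equals $\varphi(g)$ for some $g$. Therefore $f(q)=(\rho(q)-1)c$ for all $q$, so $f$ is already a coboundary on $Q$. Surjectivity is the only hypothesis used here. Finite generation of $G$ only ensures that $\Char(G,\bbmk)$ and $\cV^1$ make sense as varieties.

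For the \emph{in particular} part, the map $\hat\varphi$ is injective because $\varphi$ is onto: if $\rho\circ\varphi=\rho'\circ\varphi$, then $\rho=\rho'$. It is a morphism of algebraic groups, and it is a closed embedding, since its image is the subgroup of characters trivial on $\ker\varphi$, cut out by the equations $\chi(k)=1$. Now take $\rho\in\cV^1(Q,\bbmk)$, so that $H^1(Q,L_\rho)\neq0$. The injection just constructed gives $H^1(G,L_{\rho\cdot\varphi})\ne0$, hence $\hat\varphi(\rho)\in\cV^1(G,\bbmk)$.

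No step is a genuine obstacle. The one point needing care is matching conventions: for non-finitely-presented $Q$, $\cV^1(Q,\bbmk)$ should be read via group cohomology rather than through a finite CW model, and the cocycle description covers this case uniformly.
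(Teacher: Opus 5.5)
Your proof is correct. The paper does not prove this lemma itself; it is quoted from \cite[Corollary~A.1]{Suc14B}, so there is no in-text argument to compare with.

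The usual route is the five-term (inflation--restriction) exact sequence of $1\to\ker\varphi\to G\to Q\to 1$ with coefficients in $L_{\rho\circ\varphi}$. Since $\ker\varphi$ acts trivially, the first term is $H^1(Q,L_\rho)$, and the inflation map into $H^1(G,L_{\rho\circ\varphi})$ is injective. Your crossed-homomorphism argument is exactly the cochain-level content of that injectivity, so the two routes agree in substance. In Fox-calculus coordinates: with a common generating set, $Z^1$ for $Q$ is cut out by more equations than $Z^1$ for $G$, while the coboundaries coincide.

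Your version is entirely elementary and makes visible that only surjectivity of $\varphi$ is used, with no finite presentability of $Q$ and no choice of CW model. The spectral-sequence version gives more: it places the cokernel of inflation inside $H^0(Q,H^1(\ker\varphi,\bbmk)\otimes L_\rho)$. That is precisely what the paper exploits in the proof of Lemma~\ref{lem:no_cross_weight} via the sequence (\ref{ses}).

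One small imprecision: an injective morphism with closed image need not be a closed embedding. For example, Frobenius on $\mathbb{G}_m$ in characteristic $p$ is bijective but not an isomorphism. The correct reason here is that $\bbmk[\ab(G)]\to\bbmk[\ab(Q)]$ is surjective. This does not affect the statement, which only requires $\hat\varphi(\cV^1(Q,\bbmk))\subseteq\cV^1(G,\bbmk)$.
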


\medskip

Next we recall the definition of the resonance variety of a finite CW-complex $X$.  
For any $a \in H^1(X,\C)$, we have $a\cup a=0$. Thus, right-multiplication by $a$ defines a cochain complex
\[
(H^*(X,\C), \cup a) \colon  0 \longrightarrow H^0(X,\C) \overset{\cup a}{\longrightarrow} H^1(X,\C) \overset{\cup a}{\longrightarrow} H^2(X,\C) \overset{\cup a}{\longrightarrow} \cdots,
\]
known as the {\it Aomoto complex}.
\begin{definition}\label{def:res} 
    The degree one resonance variety of a finite CW-complex $X$ with complex coefficients is defined as:
    $$\mathcal{R}^1(X,\C) = \{a \in H^1(X,\C)|   H^1(H^*(X,\C),\cup a) \neq 0 \}.$$
\end{definition}
 Since $\cR^1(X,\C)$ only depends on the fundamental group $G$, we also denote it as $\cR^1(G,\C)$. The resonance varieties of groups also exhibit the functoriality property, see e.g. \cite[Corollary~A.1]{Suc14B}. Furthermore, we list the following result that will be used later.
\begin{lemma} \cite[Corollary~B.4]{Suc14B}\label{lem cover} Let $G'$ be a normal subgroup of $G$ of finite index. 
Then the inclusion map $i\colon G'\to G$ induces a linear map $H^1(G,\C) \to H^1(G',\C)$.
If this linear map is an isomorphism, then it identifies $ \cR^1(G',\C)$ and $\cR^1(G,\C)$.
\end{lemma}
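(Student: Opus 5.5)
The plan is to compare the two Aomoto complexes degree by degree in low degrees, using the transfer to control $H^2$. First I reduce to group cohomology. For a finite CW-complex $X$ with $\pi_1(X)=G$, the classifying map $X\to K(G,1)$ induces an isomorphism on $H^1$ and an injection on $H^2$. The cup product $H^1\times H^1\to H^2$ is natural. So $\cR^1(X,\C)$ can be computed from the truncated complex
\[
H^0(G,\C)\xrightarrow{\cup a}H^1(G,\C)\xrightarrow{\cup a}H^2(G,\C).
\]
The same holds for $G'$.

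The key step is that the restriction maps $i^*\colon H^k(G,\C)\to H^k(G',\C)$ are injective for $k\le 2$. I would prove this with the transfer (corestriction) $\mathrm{cor}\colon H^k(G',\C)\to H^k(G,\C)$. It exists because $[G:G']<\infty$, and it satisfies $\mathrm{cor}\circ i^*=[G:G']\cdot\mathrm{id}$. Since the coefficient field $\C$ has characteristic zero, multiplication by the index is invertible, so $i^*$ is injective. This is the step I expect to be the only real input. It uses finite index and characteristic zero; normality is not needed for this part of the argument.

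Now fix $a\in H^1(G,\C)$ and put $a'=i^*(a)$. Since $i^*$ is a ring homomorphism, it defines a cochain map between the two Aomoto complexes $(H^*(G,\C),\cup a)\to(H^*(G',\C),\cup a')$. I then compare the pieces that determine first cohomology.
\begin{itemize}
\item \emph{Cocycles.} For $x\in H^1(G,\C)$ we have $i^*(x)\cup a'=i^*(x\cup a)$. By injectivity of $i^*$ in degree $2$, $x\cup a=0$ if and only if $i^*(x)\cup a'=0$. Since $i^*$ is bijective on $H^1$ by hypothesis, it maps $\ker(\cup a)$ onto $\ker(\cup a')$.
\item \emph{Coboundaries.} $H^0$ is $\C$ on both sides and $i^*$ is the identity there. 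So the coboundaries $\C\cdot a$ are mapped onto $\C\cdot a'$.
\end{itemize}
Hence $i^*$ induces an isomorphism $H^1(H^*(G,\C),\cup a)\cong H^1(H^*(G',\C),\cup a')$. It follows that $a\in\cR^1(G,\C)$ if and only if $i^*(a)\in\cR^1(G',\C)$.

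Finally, because $i^*$ is surjective on $H^1$, every element of $H^1(G',\C)$ has the form $i^*(a)$. Therefore $i^*$ maps $\cR^1(G,\C)$ bijectively onto $\cR^1(G',\C)$, which is the claimed identification.
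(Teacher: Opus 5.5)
Your proposal is correct and complete. The transfer identity $\mathrm{cor}\circ i^*=[G:G']$ over $\C$ makes restriction injective on $H^2$, and together with bijectivity on $H^1$ this identifies the degree-one Aomoto cohomology at $a$ with that at $i^*(a)$, which is exactly what the statement requires. The paper gives no argument of its own here and only cites Suciu; your transfer argument is the standard one for such statements, and, as you note, normality of $G'$ plays no role.
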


The characteristic variety and resonance variety are related by the tangent cone theorem due to Libgober. 
\begin{theorem}\cite{Lib} \label{tangent cone theorem} 
    For a finitely presented group $G$, we have the following tangent inclusion
   \begin{equation} \label{eq tangent cone}
       \TC_1 \cV^1(G,\C) \subseteq \cR^1(G,\C).
   \end{equation}
\end{theorem}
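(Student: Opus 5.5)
The strategy is a deformation argument. I reduce to analytic arcs through the trivial character and then compare the cohomology of the deformed local system with the Aomoto complex, using the spectral sequence of the $t$-adic filtration. Fix a finite connected CW-complex $X$ with $\pi_1(X)=G$. Since $\cV^1$ and $\cR^1$ depend only on $G$ (and on the $2$-skeleton), I may take $X$ to be the presentation $2$-complex. Work in the identity component of $\Char(G,\C)$ and use the exponential map
\[
\exp\colon H^1(G,\C)=\Hom(G,\C)\to \Char(G,\C)^0,\qquad z\mapsto (g\mapsto e^{z(g)}).
\]
It is a local analytic isomorphism at $0\mapsto\mathbf{1}$ and identifies $\TC_1$ of an analytic germ at $\mathbf{1}$ with the tangent cone at $0$ of its preimage. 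By the curve selection lemma, every nonzero $z\in \TC_1\cV^1(G,\C)$ is the tangent direction of an analytic arc $\gamma(t)=\exp(tz+O(t^2))$ with $\gamma(t)\in\cV^1(G,\C)\setminus\{\mathbf{1}\}$ for small $t\neq 0$. After rescaling I may take $\gamma(t)=\exp(t z + t^2 w(t))$, and it suffices to show $z\in\cR^1(G,\C)$.

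Along the arc, consider the cellular cochain complex of $X$ with coefficients in the local system $L_{\gamma(t)}$. Over $R=\C\{t\}$ (or $\C[[t]]$) this is a complex $(C^*\otimes R,\ d_t)$ of free $R$-modules. Its differential expands as
\[
d_t=d_0+t\,d_1+t^2 d_2+\cdots,
\]
where $d_0$ is the untwisted cellular differential. Filter by powers of $t$ and run the associated spectral sequence, in the style of Farber–Novikov or a Bockstein argument. Its $E_1$ page is $H^*(X,\C)\otimes R$, and $d_1$ is induced by the first-order term of $d_t$. The key identification is that the map induced on $H^*(X,\C)$ by $d_1$ is, up to sign, the cup product with $z$. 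Indeed, the first-order deformation of the trivial local system in the direction $z\in H^1$ has connecting homomorphism equal to $\cup z$. This can be checked directly from Fox calculus: the linear term of $\exp(tz)$ applied to $\partial r_i/\partial x_j$ is computed by $z(x_j)$ and the Magnus expansion, which is precisely how cup products $H^1\times H^1\to H^2$ are read off a presentation. Consequently $E_2^q\cong H^q(H^*(X,\C),\cup z)$ in degrees $q\le 1$, with a possible subtlety in degree $2$ that does not matter for degree one.

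The conclusion comes from comparing ranks. For $t\neq 0$ small, $\gamma(t)\neq\mathbf{1}$, so $H^0(X,L_{\gamma(t)})=0$, while $H^1(X,L_{\gamma(t)})\neq 0$ by assumption. Hence the generic fibre, namely $H^1$ of the complex over the fraction field $\C((t))$, is nonzero. The spectral sequence converges to the cohomology over $R$, and its free part has rank equal to $\dim E_\infty^1\le\dim E_2^1$. Therefore $H^1(H^*(X,\C),\cup z)\neq 0$, that is, $z\in\cR^1(G,\C)$. Trivial directions and the case $z=0$ are automatic, since $0\in\cR^1$ whenever $b_1>0$.

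The main obstacle is the identification of $d_1$ with $\cup z$ at the level of $E_1$-cohomology. The remaining steps (curve selection, semicontinuity, convergence of the $t$-adic spectral sequence for a bounded complex of finite free modules over a DVR) are standard. I would attempt that identification first via Fox calculus on the presentation, using that the cup product on a $2$-complex is computed by the quadratic terms of the Magnus expansions of the relators.
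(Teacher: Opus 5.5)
The paper gives no proof of this statement; it is quoted from \cite{Lib}. Your deformation argument is a legitimate self-contained route, and it runs on the same first-order-deformation mechanism as Libgober's theorem. The first Bockstein of the trivial local system deformed in direction $z$ is $z\cup(\cdot)$, because $L_{\exp(tz)}\otimes\C[t]/(t^2)$ is the extension of $\C$ by $\C$ with monodromy $g\mapsto\begin{pmatrix}1&z(g)\\0&1\end{pmatrix}$, and a one-line cochain computation shows its connecting map is $z\cup(\cdot)$.

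The remaining ingredients are sound:
\begin{itemize}
\item Passing to the presentation complex costs nothing in degree one, because $H^2(G,\C)\hookrightarrow H^2(X,\C)$; your ``degree~$2$ subtlety'' never enters.
\item Ranks of the holomorphic matrices $d_t$ equal their ranks over $K=\operatorname{Frac}\C\{t\}$ for generic small $t\neq0$.
\item In the Bockstein spectral sequence of the exact couple $H(C)\xrightarrow{t}H(C)\to H(C/t)$ one has $E_1=H^*(X,\C)$, $E_{r+1}=H(E_r,\beta_r)$ and $\dim E_\infty^1=\operatorname{rank}_R H^1(C)$.
\end{itemize}
Use this exact-couple form rather than the $t$-adic filtration starting at $p=0$. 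In the filtration form, the column $p=0$ of $E_2$ is only $\ker(\cup z)$, not the Aomoto cohomology.

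However, one step fails as written. Curve selection does not produce an arc of the form $\gamma(t)=\exp(tz+O(t^2))$, and no rescaling of $t$ fixes this. A vector of the tangent cone need not be the velocity of an immersed arc. For the cusp $\{y^2=x^3\}$, the tangent cone is the line $\{y=0\}$, yet every analytic arc in the cusp through the origin has the form $(\phi^2,\phi^3)$ and hence zero velocity. Nothing in your argument excludes such germs in $\exp^{-1}\cV^1(G,\C)$.

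What curve selection does give is an arc $\gamma(t)=\exp\bigl(t^kz'+O(t^{k+1})\bigr)$ lying in $\cV^1(G,\C)\setminus\{\mathbf 1\}$ for small $t\neq0$, with $k\ge1$ and $z'$ a nonzero multiple of $z$. One way to get it is to apply curve selection to the deformation to the tangent cone, whose special fibre is the tangent cone. Your argument survives this change:
\begin{itemize}
\item Now $d_t\equiv d_0 \pmod{t^k}$, so every class has the constant lift with $d\tilde a\in t^kC$.
\item Hence $\beta_1=\dots=\beta_{k-1}=0$ and $E_k=H^*(X,\C)$.
\item $\beta_k$ is induced by the coefficient $d_k$ of $t^k$. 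This is exactly the first-order deformation in direction $z'$, i.e.\ $z'\cup(\cdot)$.
\end{itemize}
Therefore
\[
0<\dim_K H^1(C\otimes_R K)=\dim E^1_\infty\le \dim E^1_{k+1}=\dim H^1\bigl(H^*(X,\C),\cup z\bigr),
\]
so $z\in\cR^1(G,\C)$. With this repair the proof is complete, and it gives $\TC_1\cV^i\subseteq\cR^i$ in every degree $i$ with no extra work.
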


\subsection{Jump loci of weighted right-angled Artin groups}

The characteristic variety and the resonance variety of  right-angled Artin groups are computed  by Dimca, Papadima and Suciu, see  \cite[Proposition 11.5]{DPS} and \cite[Theorem~5.5]{PS06}.

For a graph with vertex set $V$, the abelianization of either $G_\Gamma$ or $G_{\Gamma_\ell}$ is $\Z^V$. For $W\subseteq V$, set
\[
T_W=\{\rho\in(\C^*)^V\mid\rho(v)=1\text{ for }v\notin W\},\qquad
L_W=\mathrm{span}_\C\{v^*\mid v\in W\},
\]
where $v^*$ is dual to the generator $v$. 

\begin{theorem}\cite[Theorem~5.5]{PS06} \label{thm RAAG}
Let \(\Gamma = (V, E)\) be a finite graph with the corresponding right-angled Artin group denoted by $G_\Gamma$. Then we have 
\begin{center}
       $ \cV^1(G_\Gamma,\C)= \bigcup_{W\subseteq V} T_W$ and $\cR^1(G_{\Gamma}, \C) = \bigcup_{W \subseteq V} L_W,$
\end{center}
    where       both unions are over all subsets $W$ of vertices $V$ such that the induced subgraph $\Gamma(W)$ is maximally disconnected. In particular, the tangent cone inclusion (\ref{eq tangent cone}) holds as equality. 
\end{theorem}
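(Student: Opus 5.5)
The plan is to compute both jump loci directly from the standard presentation of $G_\Gamma$, using the Fox calculus description of $\cV^1$ from \cite[Corollary~2.4.3]{Hir} and the cohomology ring of the Salvetti complex for $\cR^1$. I treat the resonance variety first, since it is linear algebra. The Salvetti complex $S_\Gamma$ is a $K(G_\Gamma,1)$, and its cohomology ring is the exterior face ring
\[
H^*(G_\Gamma,\C)=\textstyle\bigwedge(v^*\mid v\in V)/(v^*w^*\mid \{v,w\}\notin E).
\]
For $a=\sum_v a_v v^*$, put $\mathrm{supp}(a)=\{v\mid a_v\neq 0\}$. A class $b$ lies in $\ker(\cup a)$ on $H^1$ exactly when $a_v b_w-a_w b_v=0$ for every edge $\{v,w\}$.

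First suppose the induced subgraph $\Gamma(\mathrm{supp}(a))$ is connected. Then the edge conditions propagate along paths and force $b$ to be proportional to $a$ on $\mathrm{supp}(a)$. The vertices outside the support also cause no trouble. If $u\notin\mathrm{supp}(a)$ is adjacent to some $v$ in the support, the edge condition gives $a_v b_u=0$, so $b_u=0$. Hence $H^1$ of the Aomoto complex vanishes.

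Now suppose $\Gamma(\mathrm{supp}(a))$ is disconnected, say its vertex set splits as $W_1\sqcup W_2$ with no edges between the parts. Then $b=a|_{W_1}$ is a cocycle that is not a multiple of $a$. This shows that $\cR^1$ is the union of the $L_W$ over subsets $W$ for which $\Gamma(W)$ is disconnected. Taking the maximal such $W$, i.e.\ those with $\Gamma(W)$ maximally disconnected in the sense of the statement, gives the claimed description.

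The characteristic variety works the same way. Take $\rho\neq\mathbf 1$ with support $W=\{v\mid\rho(v)\neq1\}$. The Fox derivatives of the relator $[v,w]$ are
\[
\frac{\partial [v,w]}{\partial v}=1-w,\qquad \frac{\partial [v,w]}{\partial w}=v-1,
\]
after abelianizing, i.e.\ evaluating at $\rho$. So $H^1(G_\Gamma,L_\rho)$ is the space of $1$-cocycles $z\in\C^V$ satisfying $(1-\rho(w))z_v=(1-\rho(v))z_w$ for every edge, modulo the coboundaries $z_v=(\rho(v)-1)c$.

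If $\Gamma(W)$ is connected, the same propagation argument forces every cocycle to be a coboundary. Vertices outside $W$ adjacent to $W$ get $z=0$, and this uses the fact that they lie outside the support. If $\Gamma(W)$ is disconnected, restricting a coboundary to one component produces a nontrivial class. Hence $\cV^1=\bigcup T_W$ over the same family of subsets.

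The main obstacle is the case of isolated vertices and of vertices outside the support with no neighbors in $W$. Such a vertex $u$ contributes a free cocycle coordinate $z_u$. It therefore counts as an extra component, and $\rho$ jumps. This is consistent with the statement, because adding $u$ to $W$ makes $\Gamma(W\cup\{u\})$ disconnected, with $u$ isolated in it. I will handle this by working with maximal $W$ and checking that containment in some maximal disconnected $W$ is exactly the jump condition.

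Finally, each $T_W$ is a subtorus with tangent space $L_W$ at the identity. So $\TC_1\cV^1=\bigcup L_W=\cR^1$, which gives equality in (\ref{eq tangent cone}).
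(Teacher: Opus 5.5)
The paper gives no proof of this statement; it simply imports it from \cite[Theorem~5.5]{PS06} and \cite[Proposition~11.5]{DPS}. Your direct computation is a correct, self-contained replacement, and it is essentially the standard argument.

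You can shorten it by stating outright what your two halves already show. Put $a_v=\rho(v)-1$. Then the Fox cocycle equations $(1-\rho(w))z_v=(1-\rho(v))z_w$ are literally the Aomoto equations $a_wz_v=a_vz_w$. For $\rho\neq\mathbf 1$, the coboundaries are the line $\C a$ in both complexes. Hence
\[
\dim H^1(G_\Gamma,L_\rho)=\dim H^1(H^*(G_\Gamma,\C),\cup a),
\]
so $\rho\in\cV^1$ if and only if $a\in\cR^1$. Since $a\in L_W$ if and only if $\rho\in T_W$, both the description of $\cV^1$ and the equality $\TC_1\cV^1=\cR^1$ follow from the resonance computation alone.

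This version never uses the characteristic of the coefficient field. That explains why Corollary~\ref{cor  char} holds over $\C$, since weighted rows are just multiplied by $m$. It also explains why its analogue fails in characteristic $p\mid m$, which is exactly the phenomenon exploited in Step~2.

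One statement has to be repaired. In your first case you claim that if $\Gamma(\operatorname{supp}(a))$ is connected, then $H^1$ of the Aomoto complex vanishes. As you notice yourself later, this is false: take $\Gamma=D_2$ with vertices $x,y$ and $a=x^*$. The correct formula is as follows. For $S=\operatorname{supp}(a)\neq\emptyset$ with $\Gamma(S)$ connected,
\[
\dim H^1(H^*(G_\Gamma,\C),\cup a)=\#\{u\in V\setminus S\mid u\text{ has no neighbour in }S\}.
\]
This number is positive if and only if $S\subseteq W$ for some $W$ with $\Gamma(W)$ disconnected. In one direction, take $W=S\cup\{u\}$. Conversely, $S$ lies in a single component of $\Gamma(W)$, and any vertex of another component is such a $u$.

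If you state this formula in place of the claimed vanishing, your ``main obstacle'' paragraph becomes a one-line check. It then completes the equivalence: $a\in\cR^1$ if and only if $a\in L_W$ for some $W$ with $\Gamma(W)$ disconnected.

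Also treat $a=0$ and $\rho=\mathbf 1$ separately. If $\Gamma$ is complete, there is no disconnected $W$ at all, so the formula must be read with $\{0\}$ and $\{\mathbf 1\}$ adjoined.
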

By an observation, one can generalize this theorem to weighted right-angled Artin group as in \cite[Corollary~6.7]{LL}.
\begin{corollary} \label{cor  char}
     Let $G_{\Gamma_\ell}$ be the weighted right-angled Artin group associated to an
edge weighted finite simple graph $\Gamma_\ell$.  Then we have 
\begin{center}
       $ \cV^1(G_{\Gamma_\ell},\C)= \bigcup_{W\subseteq V} \mathbb{T}_W$ and $\cR^1(G_{\Gamma}, \C) = \bigcup_{W \subseteq V} L_W.$
\end{center}
where   
    both unions are over all subsets $W$ of vertices $V$ such that the induced subgraph $\Gamma(W)$ is maximally disconnected. In particular, the tangent cone inclusion (\ref{eq tangent cone}) holds as equality. 
\end{corollary}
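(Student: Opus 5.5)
The plan is to reduce everything to the unweighted computation in Theorem~\ref{thm RAAG}. Over $\C$ the weights should be invisible to first-order jump loci.

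\textbf{Step 1: compare Alexander matrices.} For an edge $e=\{v,w\}$ of weight $m=\ell(e)$, write $r=[v,w]$. The weighted relator is $r^m$, and the Fox calculus rules give
\[
\frac{\partial r^m}{\partial x}=(1+r+\dots+r^{m-1})\frac{\partial r}{\partial x}.
\]
Every character $\rho\in\Char(G_{\Gamma_\ell},\C)=(\C^*)^V$ factors through the abelianization $\Z^V$, and $r$ is a commutator, so $\rho(r)=1$. Hence
\[
\rho\!\left(\frac{\partial r^m}{\partial x}\right)=m\,\rho\!\left(\frac{\partial r}{\partial x}\right).
\]
Passing from $\Gamma$ to $\Gamma_\ell$ therefore multiplies the row of the Alexander matrix attached to $e$ by $m$. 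Since $m\neq 0$ in $\C$, the rank of the Alexander matrix is unchanged. Both groups have abelianization $\Z^V$, so they have the same character torus. By the Alexander matrix description of $\cV^1$ (the proposition from \cite{Hir}), we get $\cV^1(G_{\Gamma_\ell},\C)=\cV^1(G_\Gamma,\C)$. Theorem~\ref{thm RAAG} then gives the formula for the characteristic variety, with $\mathbb{T}_W=T_W$.

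\textbf{Step 2: compare cup products.} For resonance, I would check that $H^1$ and the cup product $H^1\otimes H^1\to H^2$ agree for $G_{\Gamma_\ell}$ and $G_\Gamma$ with $\C$ coefficients. The dual of the cup product in degree one is computed by the quadratic parts of the relators in the Magnus expansion, that is, by the relators modulo the third term of the lower central series. The class of $[v,w]^m$ there is $m\cdot(v\wedge w)$, which spans the same line over $\Q$ as $v\wedge w$. Hence the holonomy Lie algebras, and in particular the kernel of $\cup\colon\wedge^2H^1\to H^2$, coincide.

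One caveat: the Aomoto complex uses $H^2$ of the group, while $G_{\Gamma_\ell}$ has torsion and extra $H^2$. However, degree one resonance only depends on the image of the cup product map $\wedge^2H^1\to H^2$, and this image is identified as above. So $\cR^1(G_{\Gamma_\ell},\C)=\cR^1(G_\Gamma,\C)=\bigcup_W L_W$.

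\textbf{Step 3: tangent cone equality.} Combining the two steps, the tangent cone of $\bigcup_W T_W$ at $\mathbf{1}$ is $\bigcup_W L_W$, exactly as in Theorem~\ref{thm RAAG}.

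\textbf{Main obstacle.} The hardest point is the resonance statement. It requires making precise that the first resonance variety depends only on $\wedge^2H^1/\ker(\cup)$, so that the nonzero factor $m$ and any possible torsion in $H^2(G_{\Gamma_\ell},\Z)$ do not affect it over $\C$. The characteristic variety part is the routine Fox calculus observation above.
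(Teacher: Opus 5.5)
Your proposal is correct and is essentially the paper's proof. The characteristic-variety step is identical: each weighted row of the Alexander matrix is $m$ times the unweighted one. For resonance, the paper uses the linearized Alexander matrix of the commutator-relators presentation \cite[Section~3.8]{MS}. Its rows $(-m z_v,\ m z_u,\ 0,\dots)$ encode exactly the quadratic Magnus data you use to identify $\ker(\cup\colon\wedge^2H^1\to H^2)$.

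Reducing $\cR^1$ to this kernel makes the argument independent of $H^2$, which is a clean touch. However, your worry about extra $H^2$ is unfounded over $\C$. By Remark~\ref{rem cup}, $H^2(G_{\Gamma_\ell},\C)$ injects into the $|E|$-dimensional $H^2$ of the presentation complex. It also contains the $|E|$ linearly independent classes $u^*\cup v^*$, so it has dimension $|E|$, just as for $G_\Gamma$.
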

\begin{proof}
    For an edge   $\{u,v\}$ with weight $m$,    the abelianization of the Fox derivative of the relation $[u,v]^m=1$  gives the following row in the Alexander matrix: $$\begin{pmatrix}
  m(1-v)  & m(u-1) & 0 & \cdots & 0
\end{pmatrix}.$$
  Let $\Gamma$ denote the underlying graph of $\Gamma_\ell$. 
    Comparing with the row given by the relation $[u,v]=1$ in $G_\Gamma$, they only differs by a multiplication with the nonzero integer $m$. 
    Since $m$ is a unit in $\C$, the two groups $G_{\Gamma_\ell}$ and $G_\Gamma$ get the same Fox calculus over $\C$. Hence   $\cV^1(G_{\Gamma_\ell},\C)$  and $\cV^1(G_\Gamma,\C)$ agree  and the claim follows from \Cref{thm RAAG}.

    Since $G_{\Gamma_\ell}$ admits a commutator-relators presentation, the relation $[u,v]^m=1$ gives the following row of the linearized Alexander matrix
    $$\begin{pmatrix}
  -m z_v  & m z_u & 0 & \cdots & 0
\end{pmatrix}$$
with entries in the polynomial ring $\C[z_w|w\in V]$. 
Since the resonance variety of a group can be computed by the linearized Alexander matrix (see e.g. \cite[Section 3.8]{MS}), then  $\cR^1(G_{\Gamma_\ell},\C)$ and $\cR^1(G_\Gamma,\C)$ agree for the same reason and the claim  follows from \Cref{thm RAAG}.
\end{proof}

\begin{remark} \label{rem cup}
        Suppose $u,v$ are two vertices of $\Gamma_\ell$ (also viewed as generators of $G_{\Gamma_\ell}$) and $u^*,v^*$ represent their dual in $H^1(G_{\Gamma_\ell},\Q)$. Then $\{u,v\} \in E$ if and only if $u^*\cup v^*  \neq 0$.
In fact, suppose $\{u,v\} \in E$ and $\ell(\{u,v\}) = m \geqslant 1$. By \cite[Theorem~2.4]{FS}, $(u^* \cup v^*,e) = m$ in $H^*(X,\Q)$. Here $X$ is the $2$ dimensional cell associated to the representation by vertices and edges, and $e$ represents the $2$-cell given by relation $[u,v]^m$. 
Consider the natural inclusion $\iota: X \to K(G_{\Gamma_\ell},1)$, which induces an injective map
    $$ \iota^*\colon H^2(G_{\Gamma_\ell},\Q) \to H^2(X,\Q).$$
    Functoriality of cup product implies that $u^*\cup v^* \neq 0$ in $H^2(G_{\Gamma_\ell},\Q)$. 
    On the other hand, suppose $\{u,v\} \notin E$. Then $u^*\cup v^* = 0$ in $H^2(X,\Q)$.  Since $\iota^*\colon H^2(G_{\Gamma_\ell},\Q) \to H^2(X,\Q)$ is injective, $u^*\cup v^* = 0$ in $H^2(G_{\Gamma_\ell},\Q)$.

    Moreover, the cup-product classes corresponding to distinct edges are linearly independent:
their evaluations are supported on distinct edge cells. In particular, cup products supported on disjoint sets of edges have disjoint
images in $H^2(G_{\Gamma_\ell},\Q) .$
\end{remark}

\subsection{Quasi-projective groups}
\begin{definition}
    A group $G$ is quasi-projective if $G=\pi_1(X)$ for a connected complex smooth quasi-projective variety $X$.
\end{definition}
We list the following well-known properties for quasi-projective groups. 
\begin{proposition}\label{prop property}
\begin{itemize}
\item[(1)] If $G$ is a quasi-projective group and $K \subset G$ is a finite-index subgroup of $G$, then $K$ is also a quasi-projective group and the corresponding finite unramified cover is an algebraic map.
\item[(2)] Finite direct products of quasi-projective groups are quasi-projective.
\end{itemize}
The same assertions hold for projective groups.
\end{proposition}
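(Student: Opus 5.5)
The two assertions are standard, and I would prove each directly rather than through jump loci.

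For (1), let $G=\pi_1(X)$ with $X$ a connected smooth complex quasi-projective variety, and let $K\subset G$ have finite index $d$. By covering space theory, $K$ determines a connected topological covering $p\colon Y\to X$ of degree $d$ with $\pi_1(Y)\cong K$. The main step is to show that $Y$ carries a structure of smooth quasi-projective variety making $p$ algebraic. I would invoke the Riemann existence theorem in the form of Grauert--Remmert (SGA1, Exp.~XII): finite étale covers of a normal complex algebraic variety in the analytic category are algebraizable. Concretely, I would compactify $X\subset\overline X$ with $\overline X$ normal projective. Then Grauert--Remmert extends $p$ to a finite branched analytic cover $\overline Y\to\overline X$ with $\overline Y$ normal. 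By GAGA (finite analytic covers of projective varieties are projective, since they are given by a coherent sheaf of $\mathcal O_{\overline X}$-algebras), $\overline Y$ is projective. Hence $Y=\overline Y\setminus(\text{preimage of }\overline X\setminus X)$ is quasi-projective. It is smooth because $p$ is étale over the smooth $X$, and $p$ is a finite étale algebraic morphism. In the projective case $X$ is already compact, so $Y=\overline Y$ is projective. This algebraization is the only nontrivial input, and I would simply cite it.

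For (2), take $X_i$ with $\pi_1(X_i)=G_i$ for $i=1,\dots,k$. The product $X_1\times\cdots\times X_k$ is again a connected smooth quasi-projective variety: a product of quasi-projective varieties is quasi-projective via the Segre embedding, and it is smooth and connected. Its fundamental group is $\prod_i G_i$ by the Künneth property of $\pi_1$ for products of path-connected spaces. The same argument with projective $X_i$ gives the projective statement, since a product of projective varieties is projective.
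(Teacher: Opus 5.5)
Your argument is correct. The paper gives no proof of this proposition and simply lists it as well known. What you wrote is the standard justification it relies on: for (1), the Riemann existence theorem via Grauert--Remmert extension and GAGA, with the projective case following since $Y=\overline Y$; for (2), the Segre embedding together with $\pi_1(X_1\times X_2)\cong\pi_1(X_1)\times\pi_1(X_2)$.

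If you want a shorter proof of (1), cite directly the equivalence between finite \'etale covers of $X$ and finite topological covers of $X^{\mathrm{an}}$ (SGA1, Exp.~XII, Thm.~5.1). Then note that a variety finite over a quasi-projective one is quasi-projective, because ample bundles pull back to ample bundles under finite morphisms. This route avoids choosing a normal compactification.
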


A celebrity result  due to Beauville \cite{Bea}, Arapura \cite{Ara97} and Artal Bartolo, Cogolludo-Agust\'in and Matei \cite{ACM13} puts  strong constraints for its fundamental group. To explain their results, we first recall the definition of orbifold fibrations. 

Let $(C,\bar m)$ be a complex smooth curve of genus 
$g\ge 0$, with $s\ge 0$ points removed,  and with $n$ marked points 
$\{q_1,\dots, q_n\} $ with the multiplicity $m_j$ for $q_j$. Here  $\bar m=( m_1,\dots, m_n)$ and  $m_j>1$ for all $1\leq j \leq n$. Let $X$ be a complex smooth quasi-projective variety.  A surjective, algebriac map 
$f\colon X\to C$ is called an {\it orbifold fibration} if the generic fiber is connected, $2g+s>1$ and $f$ has  exactly multiple fibers over the $n$ marked points $\{q_1,\cdots, q_n\}$ with  the multiplicity $m_j$  for the fiber   $f^{*}(q_j)$  (the $\gcd$ of the coefficients of the divisor $f^* q_j$).

\begin{definition}
The \emph{orbifold fundamental group} of \((C, \bar m)\) is defined as
\[
\pi_1^{\mathrm{orb}}(C,\bar m)
\coloneqq
\pi_1\bigl(C\setminus\{q_1,\ldots,q_n\}\bigr)
/\left\langle \mu_j^{m_j}=1, j=1,\cdots,n\right\rangle,
\]
where \(\mu_j\) is a meridian of \(q_j\).  If \(C\) is a compact surface of genus \(g\), 
then this group has presentation
\[
\begin{aligned}
\Bigg\langle
a_1,\ldots,a_g,b_1,\ldots,b_g,\mu_1,\ldots,\mu_n
\ \Bigg|\
\prod_{i=1}^{g}[a_i,b_i]\cdot\prod_{j=1}^{n}\mu_j=1,\ 
\mu_j^{m_j}=1,\ j=1,\ldots,n
\Bigg\rangle .
\end{aligned}
\]
If \(C\) is not compact, then  \(\pi_1(C)\) is a free group of  \(r= 2g+s-1\) generators and
\[
\pi_1^{\mathrm{orb}}(C,\bar m)
=
\left\langle
a_1,\ldots,a_r,\mu_1,\ldots,\mu_n
\ \middle|\
\mu_j^{m_j}=1,\ j=1,\ldots,n
\right\rangle .
\]
\end{definition}

In both cases, the characteristic varieties can be computed directly via fox calculus. 
\begin{proposition}\label{prop:orb}\cite[Propostion 2.10, 2.11]{ACM13} \label{prop orbifold group}
Let $G=\pi_1^\orb(C,\bar m)$ and let $\Char^0(G,\C)$ denote the identity component of the character group $\Char(G,\C).$ 
\begin{itemize}
\item[(1)]  Assume that $C$ is compact of genus $g\geq 1$.  Then the torsion part of $\ab(\pi_1^{\mathrm{orb}}(C, \bar m))$ 
has order $(\prod_{j=1}^n m_j) /\lcm(m_1,\cdots,m_n)$ and 
\[
\cV^1(G, \C)=\begin{cases}
\mathrm{Char}(G,\C),
& \text{if } g \geqslant 2, \\[0.4em]
\{\mathbf{1}\}\cup (\Char(G,\C)\setminus \Char^0(G,\C)),
& \text{if } g=1 \text{ and}  \prod_{j=1}^n m_j >\lcm(m_1,\cdots,m_n), \\[0.8em]
\{\mathbf{1}\},
& \text{if } g=1 \text{ and}  \prod_{j=1}^n m_j =\lcm(m_1,\cdots,m_n). \\[0.8em]
\end{cases}
\]

\item[(2)] Assume that $C$ is non-compact.  Then the torsion part of $\ab(\pi_1^{\mathrm{orb}}(C, \bar m))$ 
has order $\prod_{j=1}^n m_j$ and 
\[
\cV^1(G,\C)=\begin{cases}
\mathrm{Char}(G,\C),
& \text{if } r \geqslant 2, \\[0.4em]
\{\mathbf{1}\}\cup (\Char(G,\C)\setminus \Char^0(G,\C)),
& \text{if } r=1. \\[0.8em]
\end{cases}
\]
In particular, if $r\geq 2$, $\cV^1(\pi_1^{\mathrm{orb}}(C,\bar m),\C) $ has $\prod_{j=1}^n m_j$ many parallel components; while if $r=1$, $\cV^1(\pi_1^{\mathrm{orb}}(C,\bar m),\C) $ has $(\prod_{j=1}^n m_j) -1$ many parallel components.
\end{itemize}
\end{proposition}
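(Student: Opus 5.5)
The plan is to compute everything directly from the two presentations with Fox calculus and the criterion of \cite[Corollary~2.4.3]{Hir}: away from $\mathbf{1}$, a character $\rho$ lies in $\cV^1(G,\C)$ if and only if $\rank A(\rho)<N-1$, where $N$ is the number of generators. So the work is (a) to describe $\Char(G,\C)$ and its identity component $\Char^0(G,\C)$, (b) to compute $\rank A(\rho)$ as a function of $\rho$, and (c) to compare the two.

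\textbf{The character group.} Abelianizing, $\ab(G)$ splits as a free part, generated by the classes of the $a_i$ (and $b_i$), times the torsion group $T$ generated by the images of the $\mu_j$.
\begin{itemize}
\item In the non-compact case the relations are only $m_j\mu_j=0$, so $T=\bigoplus_j \Z/m_j$ has order $\prod_j m_j$.
\item In the compact case the relations are $m_j\mu_j=0$ together with $\sum_j\mu_j=0$. Here $T=\Z^n/\langle m_je_j,\ \sum_j e_j\rangle$. The subgroup generated by $\sum_j e_j$ in $\bigoplus_j\Z/m_j$ has order $\lcm(m_1,\dots,m_n)$, so $|T|=\prod_j m_j/\lcm(m_1,\dots,m_n)$.
\end{itemize}
Thus a character is determined by arbitrary values $\rho(a_i),\rho(b_i)\in\C^*$ together with $m_j$-th roots of unity $\zeta_j=\rho(\mu_j)$, subject to $\prod_j\zeta_j=1$ in the compact case. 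The identity component $\Char^0$ is exactly the locus $\zeta_j=1$ for all $j$, and the components of $\Char(G,\C)$ are its $|T|$ translates.

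\textbf{Fox calculus for the relators $\mu_j^{m_j}$.} We have
\[
\frac{\partial \mu_j^{m_j}}{\partial\mu_j}=1+\mu_j+\dots+\mu_j^{m_j-1},
\]
and all other derivatives vanish. Evaluated at $\rho$, this entry equals $m_j$ if $\zeta_j=1$ and $0$ otherwise. Set $k=\#\{j:\zeta_j=1\}$.

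\textbf{Non-compact case.} The relators $\mu_j^{m_j}$ are the only ones, so $\rank A(\rho)=k$ with $N=r+n$. The condition $k<r+n-1$ becomes $r+(n-k)\ge 2$.
\begin{itemize}
\item If $r\ge2$, every character qualifies, so $\cV^1(G,\C)=\Char(G,\C)$.
\item If $r=1$, a character qualifies exactly when some $\zeta_j\neq1$, i.e. $\rho\notin\Char^0$. Hence $\cV^1(G,\C)=\{\mathbf{1}\}\cup(\Char\setminus\Char^0)$, and the count of positive-dimensional components is $\prod_j m_j-1$.
\end{itemize}

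\textbf{Compact case.} There is one extra row, coming from $R=\prod_i[a_i,b_i]\prod_j\mu_j$. Its evaluated Fox derivatives are as follows.
\begin{itemize}
\item In the $a_i$ and $b_i$ columns, the entries are units times $1-\rho(b_i)$ and $\rho(a_i)-1$.
\item In the $\mu_j$ column, the entry is $\rho(\mu_1\cdots\mu_{j-1})$, which is never zero.
\end{itemize}
The $k$ rows coming from relators with $\zeta_j=1$ are standard basis vectors in the $\mu_j$ columns. The $R$-row lies in their span only if all its other entries vanish. But if some $\zeta_j\ne1$, the $R$-row has a nonzero entry in that $\mu_j$ column, which the basis vectors do not cover. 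If all $\zeta_j=1$, the $R$-row can lie in the span only if also $\rho(a_i)=\rho(b_i)=1$ for all $i$, i.e. $\rho=\mathbf{1}$. Hence for $\rho\ne\mathbf{1}$ we get $\rank A(\rho)=k+1$, with $N=2g+n$. The condition $k+1<2g+n-1$ becomes $2g+(n-k)\ge3$.
\begin{itemize}
\item If $g\ge2$, this holds for every $\rho$, so $\cV^1(G,\C)=\Char(G,\C)$.
\item If $g=1$, it holds exactly when $n-k\ge1$, i.e. $\rho\notin\Char^0$. This gives $\{\mathbf{1}\}\cup(\Char\setminus\Char^0)$.
\item If in addition $\prod_j m_j=\lcm(m_1,\dots,m_n)$, then $T$ is trivial, so $\Char=\Char^0$ and $\cV^1(G,\C)=\{\mathbf{1}\}$.
\end{itemize}

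\textbf{Main obstacle.} The only delicate step is the rank computation for the $R$-row in the compact case. One must check that the $\mu_j$-entries of this row are genuinely nonzero (they are products of roots of unity) and treat carefully the degenerate case in which every $\zeta_j=1$ and every $\rho(a_i)=\rho(b_i)=1$, which is precisely the trivial character.
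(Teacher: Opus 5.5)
Your proposal is correct and takes the approach the paper indicates: the paper gives no separate argument, but cites \cite{ACM13} and remarks that both cases follow from a direct Fox calculus computation, which is exactly what you carry out. Your torsion count, the identification of $\Char^0(G,\C)$ with the locus where $\zeta_j=1$ for all $j$, and the rank formulas $\rank A(\rho)=k$ in the non-compact case and $\rank A(\rho)=k+1$ for $\rho\neq\mathbf{1}$ in the compact case all check out.
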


An orbifold fibration induces an 
epimorphism on fundamental groups (see e.g. \cite[Lemma 3]{CKO}): $$f_* \colon \pi_1(X) \twoheadrightarrow \pi_1^{\orb}(C, \bar m).$$
By Lemma \ref{lem: funct}, 
it induces an embedding $$\cV^1(\pi_1^{\orb}(C, \bar m),\bbmk)\hookrightarrow \cV^1(X,\bbmk).$$ 
The following structure theorem shows that every positive dimensional component of $\cV^1(X,\C)$ can be realized by the orbifold fibrations in this way. 

\begin{theorem}[\cite{ACM13}]\label{thm:structure}
Let $X$ be a complex smooth quasi-projective variety. Then we have
\[
\cV^1(X,\C)=\bigcup_{f }{f^*} \cV^1(\pi_1^{\orb}(C,{\bar m}),\C)\cup Z,
\]
where $Z$ is a finite set of torsion points and the union runs over all surjective orbifold fibrations $f\colon  X\rightarrow C$. 
Moreover, it is a finite union.
\end{theorem}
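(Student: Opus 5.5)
The inclusion ``$\supseteq$'' is the easy half and we treat it first. For a surjective orbifold fibration $f\colon X\to C$ the map $f_*\colon \pi_1(X)\to\pi_1^{\orb}(C,\bar m)$ is an epimorphism (\cite[Lemma 3]{CKO}). Lemma~\ref{lem: funct} then gives $f^*\cV^1(\pi_1^{\orb}(C,\bar m),\C)\subseteq \cV^1(X,\C)$. Adding a finite set $Z$ of points of $\cV^1(X,\C)$ does not affect this inclusion.

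For ``$\subseteq$'' we start from Arapura's structure theorem \cite{Ara97}. It says that $\cV^1(X,\C)$ is a finite union of torsion-translated subtori $\rho_i T_i$. Each positive-dimensional one is of the form $\rho_i\cdot h_i^*\Char(\pi_1(C_i),\C)$ for an admissible map $h_i\colon X\to C_i$ onto a curve with $\chi(C_i)<0$, that is, $2g+s>1$. After Stein factorization we may assume the generic fiber of $h_i$ is connected, so $h_i$ is an orbifold fibration once we record the multiplicities $\bar m$ of its multiple fibers. The zero-dimensional components are torsion points by Arapura's theorem, and they make up $Z$. Finiteness of the union follows from the finiteness of the list of components.

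The remaining, and main, task is to show that each positive-dimensional component $\rho_i T_i$ lies in $f^*\cV^1(\pi_1^{\orb}(C_i,\bar m),\C)$ with $f=h_i$. The plan has three parts.
\begin{enumerate}
\item Let $F$ be a generic fiber of $f$. We show that every character in the component restricts trivially to the image of $\pi_1(F)$ in $\pi_1(X)$. Here we would use the Leray spectral sequence for $f$: if a generic $\rho\in\rho_i T_i$ were nontrivial on $F$, then $R^0f_*L_\rho$ would vanish generically. Then $H^1(X,L_\rho)$ would be governed by $H^0(C,R^1f_*L_\rho)$ and the finitely many singular fibers. This contradicts the jump of $H^1(X,L_\rho)$ along a whole coset of $T_i$, whose dimension grows like $-\chi(C)$.
\item We then use the exact sequence $\pi_1(F)\to\pi_1(X)\to\pi_1^{\orb}(C,\bar m)\to 1$. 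This is the Nori/Catanese--Keum--Oguiso-type sequence, whose kernel is generated by the image of $\pi_1(F)$ once the meridian relations $\mu_j^{m_j}$ are imposed. It shows that every such character factors through $\pi_1^{\orb}(C,\bar m)$.
\item We conclude by showing that the component is contained in the image of $\Char(\pi_1^{\orb}(C,\bar m),\C)$. Proposition~\ref{prop:orb} describes this image: it consists of parallel translates of $f^*\Char^0$ by torsion characters supported at the multiple points. That description identifies exactly which translates lie in $\cV^1$ of the orbifold group, namely all of them when $r\ge2$ or $g\ge2$, and the non-identity ones otherwise. Hence $\rho_iT_i\subseteq f^*\cV^1(\pi_1^{\orb}(C,\bar m),\C)$.
\end{enumerate}

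The main obstacle is part (1), the fiber-triviality of all characters on a translated component. The translate $\rho_i$ is torsion but may be nontrivial, and the argument must show that this torsion is entirely accounted for by the multiple fibers, not by monodromy along $F$. If the direct Leray argument proves delicate, the fallback is as follows. Pass to a finite cyclic cover killing $\rho_i$, which is algebraic by Proposition~\ref{prop property}. There the component becomes a subtorus through $\mathbf 1$, and Arapura's untranslated case applies. One then descends the fibration and reads off the multiplicities $m_j$ from the ramification of the induced map of base curves.
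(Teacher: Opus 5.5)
The paper does not prove this statement; it quotes it from \cite{ACM13}. Your overall architecture is a sensible route: Arapura's description of the components, then triviality of the translating character on the generic fibre, then the sequence of Theorem~\ref{thm finitely generated}, then Proposition~\ref{prop:orb}. Two points need repair.

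\textbf{The input theorem is misquoted.} $\chi(C)<0$ means $2g+s>2$, not $2g+s>1$, and the strict inequality is false for translated components. For example, the components $T_\lambda$ of $\cV^1(Q',\C)$ in Step A of the proof of Lemma~\ref{lem Hodge} are translated $2$-dimensional tori. A curve with $\chi(C)<0$ and $b_1(C)=2$ is non-compact. The pulled-back $H^1(C,\C)$ would then be all of $H^1(Q',\C)$ and would carry vanishing cup products, but the cup product on $H^1(Q',\C)$ is nonzero. The correct input is $\chi(C)\le 0$, with $\chi(C)<0$ guaranteed only for components through $\mathbf 1$. You need this last fact in part (3). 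When $C$ is an elliptic curve or $r=1$, the identity coset $f^*\Char^0$ is not contained in $f^*\cV^1(\pi_1^{\orb}(C,\bar m),\C)$, so you must show that the component is not that coset.

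\textbf{Part (1) does not produce a contradiction as written.} This is the more serious gap. If $L=L_\rho$ is nontrivial on $F$, then $R^0f_*L=0$, also at the special points. The Leray spectral sequence and the projection formula then give $H^1(X,L\otimes f^*M)\cong H^0(C,R^1f_*L\otimes M)$ for every $M\in\Char(\pi_1(C),\C)$. However, a section of $R^1f_*L$ supported at a special point $c$ survives every twist, because $M$ is trivial on a small disc $D_c$. If such sections existed, $H^1$ would be nonzero along the entire coset and there would be nothing to contradict. The appeal to a dimension growing like $-\chi(C)$ does not help.

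The missing idea is that such sections do not exist. Let $N$ be the image of $\pi_1(F)$ in $\pi_1(f^{-1}(D_c))$. Using the fibration over $D_c^*$ and killing the meridians of the components of $f^{-1}(c)$, one sees that $N$ is normal with quotient $\Z/m_c$. Since $L^N=0$, inflation--restriction, together with $\pi_1(F)\twoheadrightarrow N$, shows that $H^1(f^{-1}(D_c),L)\to H^1(F,L)$ is injective. Hence $H^0(C,R^1f_*L\otimes M)$ embeds in $H^0(U,(R^1f_*L)|_U\otimes M)$, where $U$ is the locus over which $f$ is a fibration. This group vanishes for all but finitely many $M$, because only finitely many characters of $\pi_1(U)$ occur as sublines of the local system $(R^1f_*L)|_U$. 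That contradicts the coset being positive-dimensional and contained in $\cV^1(X,\C)$.

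The same argument with $L$ trivial on $F$ supplies the fact needed in part (3). There one uses $H^1(\Z/m_c,\C)=0$, and $H^1(C,M)=0$ for $M\ne\mathbf 1$ when $\chi(C)=0$.

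Your cyclic-cover fallback is only a sketch. The pulled-back coset lies in a component through $\mathbf 1$ upstairs but need not be one. Descending the fibration and reading off the $m_j$ also requires the same kind of local analysis at the special fibres.
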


The following result will play a crucial role in the proof of \Cref{thm:main}.
\begin{theorem} \cite[Lemma 3]{CKO}\label{thm finitely generated}
Consider an orbifold fibration $f\colon X\to C$.  Let $F$ denote the generic fiber of $f$. Then we have an   exact sequence of groups $$ \pi_1(F) \to \pi_1(X) \overset{f_*}{\to} \pi_1^{\orb}(C,{\bar m}) \to 1,  $$
where the first map is induced by the inclusion from $F$ to $X$.
In particular the kernel of $f_*$ is finitely generated.
\end{theorem}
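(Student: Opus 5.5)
The plan is to reduce to a genuine locally trivial fibration over a Zariski open subset of $C$, and then to pass back to $X$ by filling in the finitely many removed fibers. The filling-in changes $\pi_1$ only by killing meridians.

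First I would choose a finite set $B\subset C$ containing the marked points $q_1,\dots,q_n$ such that $f$ restricts to a topologically locally trivial fibration
\[
f^\circ\colon X^\circ=X\setminus f^{-1}(B)\to C^\circ=C\setminus B .
\]
Such a $B$ exists by generic smoothness together with the standard fact that an algebraic morphism is topologically locally trivial over a Zariski dense open subset of the base (Verdier); this is applied after compactifying $f$ so that the behaviour at infinity is controlled. The generic fiber $F$ is connected, so the homotopy long exact sequence gives an exact sequence
\[
\pi_1(F)\to\pi_1(X^\circ)\to\pi_1(C^\circ)\to 1 .
\]
In particular the image $N$ of $\pi_1(F)$ in $\pi_1(X^\circ)$ is the kernel of a homomorphism, hence normal.

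Next I would pass from $X^\circ$ to $X$. Since $X$ is smooth and $f^{-1}(B)$ is a proper closed subvariety, the map $\pi_1(X^\circ)\to\pi_1(X)$ is surjective. Its kernel is normally generated by meridians $\gamma_D$ around the irreducible divisorial components $D$ of $f^{-1}(B)$; components of higher codimension impose no relations.
\begin{itemize}
\item If $D$ occurs in $f^*(p)$ with coefficient $n_D$, then $f_*(\gamma_D)$ is conjugate in $\pi_1(C^\circ)$ to $\mu_p^{n_D}$, where $\mu_p$ is a meridian of $p$.
\item For each $p\in B$ we have $m_p=\gcd_D n_D$, with $m_p=1$ when $p$ is not a marked point. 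This uses that $f$ is surjective onto $C$, so every $f^*(p)$ is nonempty.
\end{itemize}
Hence the normal closure of $\{f_*\gamma_D\}$ in $\pi_1(C^\circ)$ equals the normal closure of $\{\mu_p^{m_p}\}$. The quotient is $\pi_1^{\orb}(C,\bar m)$: for unmarked $p$ the loop $\mu_p$ is simply filled in. This gives the surjection $f_*\colon\pi_1(X)\to\pi_1^{\orb}(C,\bar m)$.

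The main work is exactness in the middle: $\ker f_*$ must equal the image of $\pi_1(F)$ in $\pi_1(X)$. An element of $\pi_1(X)$ lifts to some $x\in\pi_1(X^\circ)$. If $f_*(x)$ is trivial in $\pi_1^{\orb}$, then $f^\circ_*(x)$ is a product of conjugates of the elements $\mu_p^{\pm m_p}$. For each $p$, Bézout gives $m_p=\sum_D a_D n_D$. Choosing suitable conjugates of the $\gamma_D$, I would form an element $y$ of the normal closure of the meridians with $f^\circ_*(y)=f^\circ_*(x)$ exactly. The subtle step, which I expect to be the main obstacle, is that $f_*(\gamma_D)$ is $\mu_p^{n_D}$ only up to conjugation and up to the choice of connecting paths. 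The discrepancies lie in $\ker f^\circ_*=N$, and $N$ is normal, so they can be absorbed into $N$. Then $x y^{-1}\in N$, and since $y$ dies in $\pi_1(X)$, the image of $x$ lies in the image of $\pi_1(F)$.

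Finally, $F$ is a smooth quasi-projective variety, so $\pi_1(F)$ is finitely generated. Hence its image $\ker f_*$ is finitely generated.
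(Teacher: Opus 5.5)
The paper gives no argument for this statement; it is quoted from \cite[Lemma 3]{CKO}. Your proposal is correct, and it is the standard proof of that lemma. You restrict to the locally trivial part over $C^\circ=C\setminus B$ and use the homotopy sequence there. You then pass to $X$ using three facts:
\begin{itemize}
\item $\ker(\pi_1(X^\circ)\to\pi_1(X))$ is normally generated by meridians $\gamma_D$;
\item $f^\circ_*(\gamma_D)$ is conjugate to $\mu_p^{n_D}$;
\item $\gcd_D n_D=m_p$, where surjectivity of $f$ is exactly what guarantees that each $p\in B$ has at least one component $D$ over it.
\end{itemize}
The citation buys the paper brevity. Your version buys a self-contained check that the gcd definition of multiplicity is the right one in the non-proper setting.

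The step you single out as the main obstacle is actually routine. Since $f^\circ_*$ is surjective, lift each conjugating element $h_D\in\pi_1(C^\circ)$ to some $\tilde h_D\in\pi_1(X^\circ)$. Then replace $\gamma_D$ by $\tilde h_D^{-1}\gamma_D\tilde h_D$, which maps exactly to $\mu_p^{n_D}$. Equivalently, $f^\circ_*$ carries the normal closure of $\{\gamma_D\}$ onto the normal closure of $\{\mu_p^{m_p}\}$. So the element $y$ exists directly, and $xy^{-1}\in\ker f^\circ_*$. Your phrase ``the discrepancies lie in $\ker f^\circ_*$'' conflates elements of $\pi_1(C^\circ)$ with elements of $\pi_1(X^\circ)$, but the lifting makes that argument unnecessary.

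Two minor points. Every component of $f^{-1}(B)$ is automatically divisorial, since $f^{-1}(p)$ is locally the zero set of one nonzero function. Also, Verdier's generic topological triviality applies directly to non-proper algebraic maps, so compactifying $f$ is optional.
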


Next we recall a result due to Dimca, Papadima and Suciu, which is closed related to \Cref{thm:structure}.

\begin{definition}
Let $\mu:\bigwedge^2 H^1 \longrightarrow H^2$
be a \(\mathbb{C}\)-linear mapping between complex linear spaces, where \(\dim H^1<\infty\), and let
\(V\subset H^1\) be a \(\mathbb{C}\)-linear subspace.
\begin{enumerate}
    \item[(1)] \(V\) is called \emph{0-isotropic} with respect to \(\mu\) if the restriction
    $    \mu^V:\bigwedge^2 V \longrightarrow H^2$
    is trivial.
    \item[(2)] \(V\) is called \emph{1-isotropic} with respect to \(\mu\) if the restriction $
    \mu^V:\bigwedge^2 V \longrightarrow H^2$    has \(1\)-dimensional image and is a nondegenerate skew-symmetric bilinear form.
\end{enumerate}
\end{definition}

\begin{theorem}\cite[Theorem C]{DPS}\label{thm:DPS-isotropic}
Let \(X\) be a connected smooth quasi-projective variety and let
\(G=\pi_1(X)\). Let $V$ be a positive-dimensional irreducible component of
\(\mathcal V^1(G,\mathbb C)\) containing the identity, and set $
L=\TC_1V\subset H^1(G,\mathbb C).$
Then $L$ is a complex linear vector space and \(L\) is \(p\)-isotropic for some
\(p\in\{0,1\}\).
\end{theorem}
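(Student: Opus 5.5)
The plan is to reduce everything to the orbifold fibrations supplied by \Cref{thm:structure}, and then read off both linearity and isotropy from the orbifold curve. Let $V$ be a positive-dimensional irreducible component of $\cV^1(G,\C)$ through $\mathbf 1$. The finite set $Z$ in \Cref{thm:structure} cannot contain a positive-dimensional component. Hence $V$ lies in some $f^*\cV^1(\pi_1^\orb(C,\bar m),\C)$ for an orbifold fibration $f\colon X\to C$. By \Cref{prop orbifold group}, the components of $\cV^1$ of an orbifold group are translates of $\Char^0$. The only one through $\mathbf 1$ is $\Char^0(\pi_1^\orb(C,\bar m),\C)$ itself, and it is a genuine component exactly when $2g+s>1$ (with the $g=1$, $s=0$ case excluded, since there $\Char^0$ is not in $\cV^1$). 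So $V=\hat f_*\bigl(\Char^0(\pi_1^\orb(C,\bar m),\C)\bigr)$ is a subtorus. Passing to the free abelian quotient of $\ab(\pi_1^\orb)$, \Cref{lem from char to H^1} gives
\[
L=\TC_1 V=f^*H^1(\pi_1^\orb(C,\bar m),\C)=f^*H^1(C,\C),
\]
using that rational cohomology of the orbifold group agrees with that of the underlying curve $C$. In particular $L$ is a linear subspace.

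For isotropy I use the factorization $f^*a\cup f^*b=f^*(a\cup b)$ for $a,b\in H^1(C,\C)$. Thus $\mu^L$ factors through $\bigwedge^2H^1(C)\to H^2(C)\xrightarrow{f^*}H^2(X)$. There are two cases.
\begin{enumerate}
\item[(a)] If $C$ is non-compact, then $H^2(C,\C)=0$ and $L$ is $0$-isotropic.
\item[(b)] If $C$ is compact of genus $g\ge 2$, then $H^2(C,\C)=\C$ and the cup pairing on $H^1(C)$ is nondegenerate. So $L$ is $1$-isotropic provided $f^*\colon H^2(C,\C)\to H^2(X,\C)$ is nonzero. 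Otherwise $\mu^L=0$ and $L$ is $0$-isotropic; either way the conclusion holds, but for the stated dichotomy to be sharp one wants injectivity.
\end{enumerate}

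The main obstacle is showing that $f^*[C]\neq 0$ in $H^2(X,\C)$ when $C$ is compact. I would argue with a good compactification $\bar X\supset X$ with normal crossing boundary $D=\sum D_i$ such that $f$ extends to $\bar f\colon\bar X\to C$ (possible since $C$ is already compact). In $H^2(\bar X)$ the class $f^*[C]$ is the fiber class $[\bar F]$. The kernel of $H^2(\bar X,\C)\to H^2(X,\C)$ is spanned by the Gysin images of the classes $[D_i]$. Suppose $[\bar F]=\sum c_i[D_i]$. Restricting to a general fiber $\bar F$, where $[\bar F]|_{\bar F}=0$, gives $\sum c_i[D_i|_{\bar F}]=0$. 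I would then intersect with powers of an ample class and separate horizontal from vertical $D_i$. Vertical $D_i$ are fiber components, and Zariski's lemma makes them proportional to $\bar F$ only when they fill a whole fiber. That cannot happen for all of $\bar F$, because the generic fiber $F=\bar F\cap X$ is nonempty. Using the negativity of fiber components together with the pairing with an ample curve class, this should yield a contradiction. If this direct argument gets messy, the fallback is a weight argument: $f^*H^1(C)$ is pure of weight one, so $\mu^L$ lands in $W_2H^2(X)=\im(H^2(\bar X)\to H^2(X))$, and one shows the polarization on $H^1(C)$ survives there.
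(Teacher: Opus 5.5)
Your argument is complete at the end of case (b). It is essentially the proof in \cite{DPS}; the paper itself only quotes the result. A positive-dimensional component through $\mathbf 1$ is the pull-back of the character torus of a curve with $\chi(C)<0$, so $L=f^*H^1(C,\C)$ is linear, and naturality of the cup product through $H^2(C,\C)\in\{0,\C\}$ gives $p\in\{0,1\}$. Since $H^2(G,\C)\hookrightarrow H^2(X,\C)$, it does not matter in which of the two you test isotropy.

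There is one small slip. $\Char^0$ lies in $\cV^1(\pi_1^{\orb}(C,\bar m),\C)$ exactly when $\chi(C)<0$, i.e.\ $2g+s\ge 3$. So besides $(g,s)=(1,0)$ you must also exclude $(g,s)=(0,2)$, where $r=1$ in Proposition~\ref{prop orbifold group}(2). This does not affect the argument.

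\textbf{The last paragraph should be deleted.} The ``main obstacle'' is not needed, and the claim it tries to prove (that $f^*[C]\neq 0$ in $H^2(X,\C)$ whenever $C$ is compact) is false for quasi-projective $X$. Here is a counterexample.
\begin{itemize}
\item Let $C$ be compact of genus $g\ge 2$, let $\mathcal L$ be a line bundle of degree $d\ne 0$, and let $X=\mathcal L\setminus(\text{zero section})$ with projection $f$. This is an orbifold fibration with compact base and no multiple fibres.
\item The fibre loop is torsion in $H_1$, so $H^1(X,\Q)=f^*H^1(C,\Q)$. Hence $V=f^*\Char(\pi_1(C),\C)$ is the identity component of $\Char(G,\C)$. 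It lies in $\cV^1$ because $g\ge 2$, so it is a component of $\cV^1(G,\C)$ with $L=H^1(X,\C)$.
\item The Gysin sequence $H^0(C,\Q)\xrightarrow{\cup e}H^2(C,\Q)\xrightarrow{f^*}H^2(X,\Q)$ with $e=d[C]\ne 0$ shows that $f^*=0$ on $H^2$. So $L$ is $0$-isotropic even though $C$ is compact.
\end{itemize}

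In your compactification picture, $\bar X=\mathbb P(\mathcal O\oplus\mathcal L)$. The boundary $D$ consists of two disjoint sections $\sigma_0,\sigma_\infty$, both horizontal, and $[\sigma_0]-[\sigma_\infty]=\pm d[\bar F]$. So the fibre class lies in the span of the boundary classes even though no fibre is contained in $D$. Neither the Zariski-lemma argument nor the weight fallback can succeed.

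Injectivity of $f^*$ on $H^2$ does hold when $X$ is projective (or compact K\"ahler), by positivity of $\int_X f^*\omega_C\wedge\omega^{n-1}$ with $n=\dim_\C X$. This is why Remark~\ref{rem isotropic} singles out that case. For quasi-projective $X$ with compact $C$ both values of $p$ occur, which is exactly why the theorem only asserts $p\in\{0,1\}$.
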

\begin{remark} \label{rem isotropic}
As shown in \cite[Corollary~7.3]{DPS}, if $H^1(X,\Q)$ has pure Hodge structure of weight two, $C$ has to be $\mathbb{CP}^1$ taking out a few  points and $L$ is always $0$-isotropic; while if $X$ is projective, then $C$ is  compact and $L$ is always $1$-isotropic. 
\end{remark}

\begin{lemma}\cite[Lemma~9.4]{DPS}\label{lem:tangent-obstruction}
Let $L=\TC_1 V$ be as in Theorem \ref{thm:DPS-isotropic}. 
Suppose that $L=L'\oplus L''$ with \(L'\neq0\) and \(L''\neq0\), and suppose that the cup product satisfies $ L'\cup L''=0.$  Assume moreover that the two images
\[
\operatorname{im}\bigl(\wedge^2 L'\to H^2(G,\mathbb C)\bigr),
\qquad
\operatorname{im}\bigl(\wedge^2 L''\to H^2(G,\mathbb C)\bigr)
\]
intersect trivially. Then the cup products on both
$\wedge^2 L'$ 
and $\wedge^2 L''$ vanish.
\end{lemma}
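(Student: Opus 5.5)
The plan is to use Theorem~\ref{thm:DPS-isotropic}, which says that $L=\TC_1 V$ is $p$-isotropic for some $p\in\{0,1\}$. I will show that the hypotheses on the splitting $L=L'\oplus L''$ rule out $p=1$. Once that is done, $p=0$, so the cup product vanishes on all of $\wedge^2 L$, and in particular on $\wedge^2 L'$ and on $\wedge^2 L''$.

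Suppose, for contradiction, that $L$ is $1$-isotropic. Then the image of $\mu^L\colon \wedge^2 L\to H^2(G,\C)$ is a line $\C\,\omega$ with $\omega\neq 0$. So there is a skew-symmetric bilinear form $B$ on $L$ with $x\cup y=B(x,y)\,\omega$, and by definition $B$ is nondegenerate.

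The first step is to locate the two images. The images of $\wedge^2 L'$ and $\wedge^2 L''$ are linear subspaces of the line $\C\,\omega$, so each is either $0$ or $\C\,\omega$. If both were $\C\,\omega$, they would intersect nontrivially, which contradicts the hypothesis. Hence at least one of them is zero; after relabeling, assume the cup product vanishes on $\wedge^2 L''$, that is, $B|_{L''\times L''}=0$.

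The second step uses the orthogonality hypothesis $L'\cup L''=0$, which gives $B(L',L'')=0$. Take any nonzero $y\in L''$, which exists because $L''\neq 0$. Then $B(y,L'')=0$ and $B(y,L')=0$, so $B(y,L)=0$ since $L=L'+L''$. This contradicts the nondegeneracy of $B$.

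Therefore $L$ cannot be $1$-isotropic, so it is $0$-isotropic, and both restricted cup products vanish. The only delicate point is the first step: the argument needs both the one-dimensionality of the image in the $1$-isotropic case and the trivial-intersection hypothesis to force one of the two restricted forms to be identically zero. Everything else is linear algebra.
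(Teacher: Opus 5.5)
Your proof is correct. It is essentially the standard argument behind \cite[Lemma~9.4]{DPS}, which the paper quotes without proof. Theorem~\ref{thm:DPS-isotropic} reduces the claim to excluding the $1$-isotropic case. There, the images of $\wedge^2 L'$ and $\wedge^2 L''$ both lie in a single line, so the trivial-intersection hypothesis forces one of them to vanish. The orthogonality $L'\cup L''=0$ then places that nonzero summand in the radical of the nondegenerate form, which is a contradiction.
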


\section{Proof of \Cref{thm:main}}\label{pfmain}
\subsection{Step 1}
The following result generalizes its unweighted counterpart in \cite{DPS}. Although the argument is essentially the same, we include a complete proof for the sake of completeness.
\begin{lemma}\label{lem:multipartite}
   If $G_{\Gamma_\ell}$ is  quasi-projective, then the underlying graph $\Gamma$ is  complete multipartite.
\end{lemma}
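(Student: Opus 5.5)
We follow the strategy of \cite[Theorem~11.7]{DPS}. Recall that a finite graph is complete multipartite exactly when its complement is a disjoint union of cliques. Equivalently, non-adjacency is transitive, so $\Gamma$ contains no induced subgraph on three vertices $u,v,w$ with exactly one edge, say $\{v,w\}\in E$ and $u$ adjacent to neither. We argue by contradiction: suppose $G_{\Gamma_\ell}=\pi_1(X)$ with $X$ smooth quasi-projective, and suppose $\Gamma$ contains such a configuration $u,v,w$.

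By \Cref{cor  char}, $\cV^1(G_{\Gamma_\ell},\C)$ is the union of the subtori $T_W$, one for each vertex set $W$ inducing a maximally disconnected subgraph. Choose a maximal such $W$ containing $u$ and $v$. Such a $W$ exists because $\{u,v\}$ spans no edge, and it cannot contain $w$ because $\{v,w\}\in E$. Since $W$ is maximal, $T_W$ is an irreducible component of $\cV^1$ through $\mathbf{1}$ of dimension $|W|\ge 2$, and its tangent cone is $L=L_W$. \Cref{thm:DPS-isotropic} then says $L_W$ is $0$- or $1$-isotropic. By \Cref{rem cup}, $x^*\cup y^*=0$ for all $x,y\in W$, so the cup product vanishes on $\wedge^2 L_W$ and $L_W$ is $0$-isotropic. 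This alone does not give a contradiction, so we next use $w$.

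The contradiction comes from comparing two maximal disconnected sets. The set $\{u,w\}$ is also edgeless, so it lies in a maximal disconnected $W'$ with $v\notin W'$. Thus $T_W$ and $T_{W'}$ are distinct components, and they intersect in the positive-dimensional torus $T_{W\cap W'}$, which contains $T_{\{u\}}$. By \Cref{thm:structure}, each positive-dimensional component through $\mathbf{1}$ has the form $f^*\cV^1(\pi_1^{\orb}(C,\bar m),\C)$ for an orbifold fibration $f$, and by \cite{DPS} distinct components through $\mathbf{1}$ meet only in finitely many torsion points. The intersection $T_W\cap T_{W'}$ is positive-dimensional, which is a contradiction.

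The main obstacle is to state this finiteness of intersections cleanly from the results quoted earlier, since it is not stated verbatim above. If it cannot be quoted directly, we argue with tangent spaces. Distinct components $f^*(\cdot)$ and $g^*(\cdot)$ give subspaces $L=\im f^*$ and $L'=\im g^*$ by \Cref{lem from char to H^1}. If $L\cap L'\neq 0$, then a nonzero class lies in both pullbacks. For curve-type fibrations with $\dim\ge 2$, this forces $f$ and $g$ to define the same pencil, as in \cite{DPS} Theorem C(3), so $L=L'$. Then \Cref{lem from char to group} forces the two tori to coincide, contradicting $T_W\neq T_{W'}$. Here $L_W\cap L_{W'}\ni u^*$ is nonzero, so this variant yields the same contradiction. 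Hence no such triple exists and $\Gamma$ is complete multipartite.
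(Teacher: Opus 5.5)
Your reduction to an induced triple $u,v,w$ with only the edge $\{v,w\}$ is fine. The rest has a genuine gap: you read ``maximally disconnected'' as ``maximal edgeless''. In Corollary~\ref{cor  char} the components are the $T_W$ with $W$ maximal among vertex sets whose induced subgraph is \emph{disconnected}, and such subgraphs may contain edges. Since $u$ is isolated in $\Gamma(\{u,v,w\})$, the triple itself induces a disconnected subgraph. So three of your claims are unjustified: that a maximal disconnected $W\supseteq\{u,v\}$ must avoid $w$, that $W'\supseteq\{u,w\}$ must avoid $v$, and that $x^*\cup y^*=0$ for all $x,y\in W$.

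The smallest bad graph shows the intersection strategy cannot work. Take three vertices with the single edge $\{v,w\}$. The whole vertex set is the unique maximal disconnected set, so $\cV^1(G_{\Gamma_\ell},\C)=(\C^*)^3$ is irreducible. There are then no two distinct components whose tangent spaces could violate the transversality $L_\alpha\cap L_\beta=0$ from \cite{DPS}.

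The missing idea is that the contradiction comes from isotropicity, applied to a maximal disconnected $W$ that is \emph{not} discrete. That is exactly the case your first paragraph set aside. Take $W$ maximal disconnected with $\{u,v,w\}\subseteq W$. Then $v^*\cup w^*\neq0$ by Remark~\ref{rem cup}, so $L_W$ is not $0$-isotropic, and it is not $1$-isotropic either. (In the three-vertex example, $u^*$ lies in the radical and $\dim L_W=3$ is odd.)

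The paper handles the general case as follows. Split $W$ into a connected component $W_1$ of $\Gamma(W)$ and $W_2=W\setminus W_1$. Then $L_W=L_{W_1}\oplus L_{W_2}$ with $L_{W_1}\cup L_{W_2}=0$. The images of $\wedge^2L_{W_1}$ and $\wedge^2L_{W_2}$ meet trivially, because cup products of distinct edges are linearly independent (Remark~\ref{rem cup}). Lemma~\ref{lem:tangent-obstruction} then forces the cup product on $L_W$ to vanish. Hence every maximal disconnected induced subgraph is discrete, which is incompatible with $v,w\in W$.
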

\begin{proof}
In the proof of this lemma, all the cup products are taking over complex coefficients.

If $\Gamma$ is complete, then it is complete multipartite, with each part consisting of
one vertex. Hence we may assume that $\Gamma$ is not complete.

Let $W\subseteq V$ be maximal with respect to the property that the induced subgraph
$\Gamma(W)$ is disconnected. By Corollary \ref{cor  char}, the coordinate subtorus $T_W$
 is an irreducible component of $\mathcal V_1(G_{\Gamma^\ell},\mathbb C)$ containing the identity with tangent space $L_W$.

Choose one connected component  $W'$ of $\Gamma(W)$, and set $W''=W\setminus W'.$
Both are nonempty. Since there are no edges between $W'$ and $W''$, we have
\[
L_W=L_{W'}\oplus L_{W''}, L_{W'}\cup L_{W''}=0. 
\] 
On the other hand, since the edge sets of \(\Gamma(W')\) and \(\Gamma(W'')\) are disjoint,
the two images 
\[
\wedge^2L_{W'}\to H^2(G_{\Gamma^\ell},\mathbb C) 
 \text{ and } 
\wedge^2L_{W''}\to H^2(G_{\Gamma^\ell},\mathbb C)
\]
must intersect trivially.

Assume that \(G_{\Gamma^\ell}\) is quasi-projective. Applying Lemma \ref{lem:tangent-obstruction} to \(T_W\),
we obtain that
\[
\wedge^2L_{W'}\to H^2(G_{\Gamma^\ell},\mathbb C)
\quad\text{and}\quad
\wedge^2L_{W''}\to H^2(G_{\Gamma^\ell},\mathbb C)
\]
are both zero. Therefore the cup product on \(L_W\) is also zero.
By Remark \ref{rem cup}, 
 the vanishing of
the cup product on \(L_W\) implies that \(\Gamma(W)\) is discrete. Thus every maximal
disconnected induced subgraph of \(\Gamma\) is discrete.

It remains to prove that \(\Gamma\) is complete multipartite. Suppose not. Then there exists a maximal discrete induced subgraph $\Gamma'$ and another vertex that is not adjacent to some vertex of $\Gamma'$. Then $\Gamma'$, together with the extra vertex, is again disconnected, which leads to a contradiction with the assumption that $\Gamma'$ is maximal disconnected.
\end{proof}
\subsection{Step 2}
Complex jump loci determine the
multipartite structure but do not detect the weights. We now use 
local systems with positive characteristic field coefficients to show that heavy edges can not be adjacent to a discrete block of size greater than one.
\begin{lemma}\label{lem:no_cross_weight}
    Let $\Gamma(W)$ be a maximal disconnected subgraph of $\Gamma_\ell$ with at least two vertices. If there is an edge $\{u,v\}$ with weight $m  > 1$, where $ u \in W $ and $v \in V \setminus W$, then $G_{\Gamma_\ell}$ is not a quasi-projective group.
\end{lemma}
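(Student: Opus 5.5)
Assume for contradiction that $G=G_{\Gamma_\ell}$ is quasi-projective. By Lemma~\ref{lem:multipartite}, $\Gamma$ is complete multipartite, so $W$ is a discrete block with $|W|=k\ge 2$, and $v$ lies outside $W$ and is joined to every vertex of $W$. The plan is to build an orbifold fibration out of the component $T_W$, and then to compare positive-characteristic cohomology in two ways: once through the fibration, and once through Fox calculus on the presentation.

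\emph{The fibration.} By Corollary~\ref{cor  char}, $T_W$ is an irreducible component of $\cV^1(G,\C)$ of dimension $k\ge2$ through $\mathbf 1$. By Theorem~\ref{thm:structure} there is an orbifold fibration $f\colon X\to C$ with $f^*\cV^1(\pi_1^{\orb}(C,\bar m),\C)\supseteq T_W$. Since $T_W$ is $0$-isotropic (Remark~\ref{rem cup}, as $W$ is discrete), $C$ is non-compact with $\pi_1(C)=\F_r$. Comparing tangent spaces via Lemmas~\ref{lem from char to H^1} and~\ref{lem from char to group} gives $r=k$. Moreover, after composing with the projection to the free part, $f_*$ is identified with the retraction
\[
\pi\colon G\twoheadrightarrow \F_W=\langle w\in W\rangle,
\]
which kills all vertices outside $W$. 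This retraction is well defined because $\Gamma(W)$ is discrete. The subtle point is the possible orbifold torsion: one has to argue that, up to the isomorphism of Lemma~\ref{lem from char to group}, the free quotient of $\pi_1^{\orb}$ is exactly $\F_W$. By Theorem~\ref{thm finitely generated}, the kernel $K$ of $f_*$ is finitely generated; one then deduces (perhaps passing to a finite-index subgroup) that $K'=\ker\pi$ is finitely generated.

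\emph{Upper bound from Hochschild--Serre.} Choose a prime $p\mid m$ and set $\bbmk=\overline{\F}_p$. Take a generic character $\rho=\hat\pi(\tau)$ with $\tau\in\Char(\F_W,\bbmk)$. The five-term exact sequence for $1\to K'\to G\to\F_W\to1$ gives
\[
0\to H^1(\F_W,L_\tau)\to H^1(G,L_\rho)\to H^0\bigl(\F_W,H^1(K',\bbmk)\otimes L_\tau\bigr).
\]
Here $H^1(\F_W,L_\tau)$ has dimension $k-1$. Since $K'$ is finitely generated, $H^1(K',\bbmk)$ is finite-dimensional. A generic $\tau$ therefore has no invariants in it, since only finitely many eigenvalue conditions arise. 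So $\dim H^1(G,L_\rho)=k-1$ for $\rho$ in a Zariski-dense subset of $\hat\pi(\Char(\F_W,\bbmk))$.

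\emph{Lower bound from Fox calculus.} For $\rho$ with $\rho(w)=t_w$ generic for $w\in W$ and $\rho=1$ on the other vertices, compute the Alexander matrix. The relation $[u,v]^m$ contributes the row $m(1-\rho(v),\rho(u)-1,\dots)$, which vanishes mod $p$. Every other relation involving $u$ comes from weight-one edges, and each of these rows is proportional to that of the RAAG. Thus over $\bbmk$ the edge $\{u,v\}$ is effectively deleted, so $v$ becomes non-adjacent to $u$. Consequently $W\cup\{v\}$ behaves as a disconnected set for this character, and the rank drops by one, giving $\dim H^1(G,L_\rho)\ge k$. For $p\nmid$ the other weights this is a direct rank count: the column of $v$ now only meets rows $\rho(w)-1$ for $w\in W\setminus\{u\}$. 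If other weights are also divisible by $p$, those rows vanish too, which only lowers the rank further. This contradicts the upper bound $k-1$.

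\emph{Main obstacle.} The hardest step is the fibration step: identifying $f_*$, up to automorphism and orbifold torsion, with the retraction $\pi$, so that finite generation of the kernel transfers to the kernel $K'$ used in the Hochschild--Serre argument. I would first try to apply Hochschild--Serre directly to $K=\ker f_*$ and the orbifold group. This uses characters in the identity component of $\Char(\pi_1^{\orb},\bbmk)$, pulled back along the free quotient, which coincide with $\hat\pi(\Char(\F_W,\bbmk))$ by Lemma~\ref{lem from char to group}. The dimension $r-1=k-1$ for generic characters of the free part survives the torsion.
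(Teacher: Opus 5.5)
Your overall plan matches the paper's: an orbifold fibration built from $T_W$, a Hochschild--Serre upper bound from a finitely generated kernel, and a Fox-calculus lower bound over $\bbmk$ of characteristic $p\mid m$. The first gap is that the lower bound is miscomputed. Take $\rho$ with $\rho(w)=t_w$ generic on all of $W$ and $\rho=1$ off $W$. The column of $v$ still contains the entries $\ell(\{w,v\})(t_w-1)$ for $w\in W\setminus\{u\}$, and these are nonzero when those weights are prime to $p$. Killing the row of $[u,v]^m$ therefore does not lower the rank. For instance, for $\langle u,w,v\mid [u,v]^p,[w,v]\rangle$ the evaluated matrix has rank $1$, so $\dim H^1(G,L_\rho)=1=k-1$. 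There is no jump at generic points of $T_W$, hence no contradiction. The jump lives only on the one-parameter subtorus where $\rho$ is nontrivial at $u$ alone. The paper takes $\rho_\lambda(u)=\lambda\neq1$ and $\rho_\lambda=1$ on every other vertex. Then the Alexander matrix of the quotient on $W\cup\{v\}$ vanishes identically, which gives $\dim H^1\ge k$ via Lemma~\ref{lem: funct}. Consequently your upper bound must also hold on this non-generic curve. That needs the stronger fact that the bad set of $\tau$ is finite, since each $\tau(x_i)^{-1}$ must be an eigenvalue of $A_i$. Zariski density of good $\tau$ in the whole torus is not enough.

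The second gap is orbifold torsion, and neither of your routes handles it. Suppose $f$ had multiple fibers. Then the kernel of $G\to\pi_1^{\orb}(C,\bar m)\to\pi_1(C)$ is not even finitely generated, because it surjects onto the normal closure of the $\mu_j$ in $\F_r*\Z_{m_1}*\cdots*\Z_{m_n}$. Your fallback claim that the dimension $r-1$ survives the torsion is also false in characteristic $p$. For $\tau$ trivial on the $\mu_j$, the row of $\mu_j^{m_j}$ evaluates to $m_j$, so $\dim_{\bbmk}H^1(\pi_1^{\orb}(C,\bar m),L_\tau)=r-1+\#\{j: p\mid m_j\}$, which can swallow the jump. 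The missing step is to rule out multiple fibers in characteristic zero. Since $C$ is non-compact with $r\ge2$, Proposition~\ref{prop orbifold group}(2) puts every component of $\Char(\pi_1^{\orb}(C,\bar m),\C)$ into $\cV^1$. A multiple fiber would then place a translate of $T_W$ by a character outside $T_W$ inside $\cV^1(G,\C)$. This is impossible by Corollary~\ref{cor  char} together with the maximality of $W$. Hence $\pi_1^{\orb}(C,\bar m)=\F_r$, and Hochschild--Serre applies directly to $K=\ker f_*$. You then only need to compare $f_*$ with the retraction on abelianizations, via $\phi$ from Lemma~\ref{lem from char to group}. Your unjustified claim that $\ker\pi$ is finitely generated is never needed.
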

\begin{proof}
Assume that $ G_{\Gamma^\ell}\cong \pi_1(X)$
for some complex smooth quasi-projective variety $X$.  By Lemma \ref{lem:multipartite},
the underlying graph $\Gamma$ is complete multipartite. Hence $\Gamma(W)$ is discrete with $r = |W|$ vertices.

 Let $T_W$ be the sub-torus corresponding to \(W\) as in Corollary \ref{cor  char}. 
By \Cref{thm:structure} there exists an orbifold fibration 
$f:X\longrightarrow C$
onto a smooth algebraic curve $C$, with connected generic fiber, such that
\[
T_W=f^*(T).
\]
for  $T$ the identity component  of $\cV^1(\pi_1^\text{orb}(C,\bar m),\C)$.  Furthermore, by \Cref{thm finitely generated} 
\begin{equation} \label{ses 2}
    1\longrightarrow K\longrightarrow G_{\Gamma_\ell}\overset{f_*}{\longrightarrow} \pi_1^\text{orb}(C,\bar m)\longrightarrow 1,
\end{equation}
is exact with $K$ finitely generated. Indeed, $K$ is the image of $\pi_1(F)$ with $F$ being the generic fiber of $f$. 
 Since $\Gamma(W)$ is discrete, the fact that $r-1 \leq \mathrm{dim}_\C H^1(G_{\Gamma_\ell},\C_{f^*\rho}) = \mathrm{dim}_\C H^1(C,\C_\rho)$ for generic character $\rho$ \cite[Thm~6.2]{DPS} forces $C$ to be non-compact.

 We claim that $f$ has no multiple fibers, i. e. $\pi_1^\text{orb}(C,\bar m) = \pi_1(C)$.  Otherwise, the existence of multiple fibers leads to translated sub-torus in $\cV^1(X,\C)$ by \Cref{thm:structure} and Proposition \ref{prop orbifold group}(2). But Corollary \ref{cor  char} shows that $\cV^1(G_{\Gamma_\ell},\C)$ has no translated positive dimensional sub-torus for weighted right-angled Artin group.

Now we have two group epimorphism:  $f_*\colon G_{\Gamma_\ell}\twoheadrightarrow \pi_1(C)\cong \F_r$ in (\ref{ses 2});
 the other one is the natural projection $g\colon G_{\Gamma_\ell} \to G_W\cong \F_r$. Here $G_W$ is the weighted right-angled Artin group corresponding to the sub-graph $\Gamma(W)$ and $G_W=\F_r$ since $W$ is discrete. The natural projection $g$ sends the vertices not in $W$ to the identity element. Since $g$ is  surjective, by Lemma \ref{lem: funct} it induces an embedding $\hat g\colon \cV^1(G_W,\C) \hookrightarrow \cV^1(G_{\Gamma_\ell},\C),$ and $\im \hat g=T_W=f^*T$. Note that the character of a group always factor through the abelianization of the group. Then by Lemma \ref{lem from char to group}, there exists a group isomorphism 
 $\phi\colon H_1(C,\Z) \to H_1(G_W,\Z) $ such that the following diagram commutes:
 \begin{equation} \label{commutative diagram}
     \begin{tikzcd}
     & \pi_1(C) \ar[r,"\ab"] &  H_1(C,\Z) \ar[dd,"\phi"]\\
   G_{\Gamma_\ell} \ar[rd,"g"] \ar[ru,"f_*"] \ar[r,"\ab"] & H_1(G_{\Gamma_\ell},\Z) \ar[rd] \ar[ru] & \\
     & G_W \ar[r,"\ab"]  &  H_1(G_W,\Z),
 \end{tikzcd} 
 \end{equation}
where $\ab$ is the abelianzation map.

Fix a prime $p\mid m$, and let $\bbmk$ be an algebraically closed field of
characteristic $p$. Choose a character
$\rho_\lambda\colon H_1(G_W,\Z)\to \bbmk^*$
such that
\[
\rho_\lambda(u)=\lambda \in \bbmk^* \text{ with } \lambda \neq 1\qquad
\text{ and }\rho_\lambda(x)=1\quad\text{for all } u\neq x \in W,
\]
where $u$ is the chosen vertex in $W$. 

Let $Q$ be the weighted right-angled Artin group corresponding to the induced subgraph  on $W\cup \{v\}$. 
Consider the natural projections $g'\colon G_{\Gamma_\ell} \twoheadrightarrow Q$ by sending the vertex not in this subgraph to $1$. It is obvious that 
$g$ factors through $g'$. Pulling back the local system $\rho_\lambda$ to $G_{\Gamma_\ell}$ and $Q$ and still denote the local system by $L_{\rho_\lambda}$ (it will be clear in the context which local system we are using). Since $g'$ is surjective, by Lemma \ref{lem: funct}
we have $$ \dim H^1(G_{\Gamma_\ell}, L_{\rho_\lambda})\geq \dim H^1(Q, L_{\rho_\lambda}).$$

We now compute \(H^1(Q,L_{\rho_\lambda})\) by Fox calculus. The group \(Q\) has \(r+1\)
generators, namely the vertices of \(W\cup\{v\}\). Since \(W\) is discrete, there are
no relations among vertices of \(W\). The only relations are those corresponding to
edges \(\{w,v\}\), with \(w\in W\). For any \(w\in W\setminus\{u\}\), say the edge $\{w,v\} $ has weight $n$. Then the corresponding relation gives a row $$\begin{pmatrix}
  n(1-v) & 0 & \cdots & 0 & n(w-1) & 0 & \cdots & 0  
\end{pmatrix}$$ in the Alexander matrix. Since $\rho_\lambda(w)=\rho_\lambda(v)=1$, this row vanishes for $\rho_\lambda$. Meanwhile,  the edge $\{u,v\} $ for the group $Q$ gives the row of the Alexander matrix 
$$\begin{pmatrix}
  m(1-v)  & m(u-1) & 0 & \cdots & 0
\end{pmatrix}.$$
Then this  row also vanishes, since  $\operatorname{char}\bbmk=p$ divides $m$.  Thus the evaluated Alexander matrix of \(Q\) at \(\rho_\lambda\) is the zero matrix. Since
\(Q\) has \(r+1\) generators and \(\lambda \ne1\), we get
\[
\dim_\bbmk H^1(Q,L_{\rho_\lambda})
=
(r+1)-1
=
r.
\]
Consequently,  we have
\begin{equation} \label{inequality}
     \dim H^1(G_{\Gamma_\ell}, L_{\rho_\lambda}) \geq r \text{ if } \lambda \neq 1
\end{equation}

On the other hand, let  \(\rho'_\lambda \coloneqq \rho_\lambda  \circ \phi  \colon H_1(C,\Z)\to \bbmk^*\) from the commutative diagram (\ref{commutative diagram}). Then the local system \(f^* L_{\rho'_\lambda}\)
is trivial on \(K\).  The Hochschild--Serre spectral sequence for (\ref{ses 2}) (with $\pi_1(C)=\F_r$)
gives an  exact sequence:
\begin{equation}
    \label{ses}
0\to H^1(\F_r,L_{\rho'_\lambda})\to H^1(G_{\Gamma_\ell},f^*L_{\rho_\lambda})
\to H^0(\F_r,H^1(K,\bbmk)\otimes L_{\rho'_\lambda}) .
\end{equation}
Here $\F_r$ acts on $H^1(K,\bbmk)$ by the dual of conjugation. 
Since \(K\) is finitely generated, \(H^1(K,\bbmk)\) is finite-dimensional. Set $\F_r=\langle x_1,\cdots, x_r\rangle$. Let $A_i$ denote the linear action of $x_i$ on \(H^1(K,\bbmk)\) and assume that $\rho'_\lambda$ sends $x_i$ to $\lambda'_i\in \bbmk^*$. 
Then we have an isomorphism $$H^0(F_r,H^1(K,\bbmk)\otimes L_{\rho'_\lambda}) \cong \bigcap_{i=1}^r \ker(\lambda'_i A_i-\mathrm{id}).$$
Hence $H^0(\F_r,H^1(K,\bbmk)\otimes L_{\rho'_\lambda})=0$ for all but finitely many $(\lambda'_1,\cdots,\lambda'_r)$. 
Since \(\bbmk\) is infinite, via the isomorphism $\phi$ in (\ref{commutative diagram}) we may choose \(\lambda\in \bbmk^*\setminus\{1\}\) such that
$
H^0(\F_r,H^1(K,\bbmk)\otimes L_{\rho'_\lambda})=0.$
Since \(\lambda\ne1\), by the exact sequence  (\ref{ses}) we have
\[
H^1(G_{\Gamma_\ell},L_{\rho_\lambda})=H^1(G_{\Gamma_\ell}, f^*L_{\rho'_\lambda})\cong H^1(\F_r,L_{\rho'_\lambda})=r-1.
\]
This contradicts the inequality (\ref{inequality}). Therefore \(G_{\Gamma^\ell}\) cannot be quasi-projective.
\end{proof}
\subsection{Step 3}\label{index2}
It remains to show that the heavy
edges are pairwise disjoint. Throughout this subsection, let \(G=G_{\Gamma^\ell}\) be a quasi-projective weighted
right-angled Artin group, and suppose that \(\{u,v\}\) is an edge of weight \(m>1\).
Write the remaining vertices as  $\{ x_1,\ldots,x_n\}.$
Let $\alpha\colon G\longrightarrow \mathbb Z_2$
be the group homomorphism defined by
\[
\alpha(u)=\alpha(v)=1\in \mathbb Z_2,\qquad
\alpha(x_i)=0\in \mathbb Z_2.
\]
Set $ G'=\ker \alpha.$ Suppose \(G=\pi_1(X)\), where \(X\) is a smooth quasi-projective variety, and let
$\pi\colon X'\longrightarrow X$
be the corresponding double cover map, with $X'$ also smooth quasi-projective and $\pi_1(X')\cong G'$, see Proposition \ref{prop property}.  Then the induced map
\[
\pi^* \colon H^1(X,\mathbb Q)\longrightarrow H^1(X',\mathbb Q)
\]
is injective and is a morphism of mixed Hodge structures. 

\begin{lemma} \label{lem iso} With the above notations and assumptions, 
    $\pi^*\colon H^1(X,\Q) \to H^1(X',\Q)$ is an isomorphism of mixed Hodge structure.
\end{lemma}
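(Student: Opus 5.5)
The plan is to reduce the statement to a dimension count. We already know that $\pi^*$ is injective and a morphism of mixed Hodge structures. A bijective morphism of mixed Hodge structures is an isomorphism, because morphisms of MHS are strict with respect to both filtrations. So it suffices to show that $\dim_\Q H^1(X',\Q)=\dim_\Q H^1(X,\Q)=|V|$. We may check this after tensoring with $\C$ and work at the level of groups, with $G'=\ker\alpha$ of index two in $G$.

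By Shapiro's lemma, $H^1(G',\C)\cong H^1(G,\C[G/G'])$. Since $G/G'\cong\Z_2$, the coefficient module splits as $\C[\Z_2]\cong\C\oplus\C_\chi$. Here $\chi\colon G\to\C^*$ is the sign character $\chi=(-1)^{\alpha}$, so $\chi(u)=\chi(v)=-1$ and $\chi(x_i)=1$. Hence
\[
H^1(G',\C)\cong H^1(G,\C)\oplus H^1(G,\C_\chi),
\]
and the whole statement reduces to showing $H^1(G,\C_\chi)=0$, that is, $\chi\notin\cV^1(G,\C)$.

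First I would pin down the graph structure using the earlier lemmas. By Lemma \ref{lem:multipartite}, $\Gamma$ is complete multipartite. The vertex $u$ lies in some block $W$, which is a maximal disconnected (discrete) induced subgraph, and $v\notin W$ since $u$ and $v$ are adjacent. If $|W|\ge2$, then Lemma \ref{lem:no_cross_weight} applied to the heavy edge $\{u,v\}$ contradicts quasi-projectivity. Hence $u$ forms a singleton block, so $u$ is adjacent to every $x_i$.

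Next I would evaluate the Alexander matrix at $\chi$, using the row shapes computed in the proof of Corollary \ref{cor  char}. An edge $\{a,b\}$ of weight $k$ contributes $k(1-\chi(b))$ in column $a$ and $k(\chi(a)-1)$ in column $b$.
\begin{itemize}
\item The row of $\{u,v\}$ becomes $(2m,-2m,0,\dots,0)$.
\item The row of $\{u,x_i\}$ with weight $k_i$ becomes $0$ in column $u$ and $-2k_i$ in column $x_i$.
\item Rows of edges among the $x_j$, and rows of edges $\{v,x_i\}$ (in column $x_i$ they give $-2k$ and in column $v$ they give $0$), only add further vectors.
\end{itemize}
The rows from $\{u,v\}$ and $\{u,x_1\},\dots,\{u,x_n\}$ are linearly independent, so $\rank A(\chi)\ge n+1=(n+2)-1$. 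Since $\chi\ne\mathbf 1$, the Fox calculus criterion (\cite[Corollary~2.4.3]{Hir}) gives $H^1(G,\C_\chi)=0$.

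Combining these steps, $\dim H^1(X',\Q)=\dim H^1(X,\Q)$, and the injective morphism $\pi^*$ is an isomorphism of mixed Hodge structures. The only genuinely nontrivial input is the structural fact that $u$ is adjacent to all other vertices, which is exactly what Step 2 supplies. The rest is routine, and I would double-check the signs in the Fox rows only to make sure no cancellation occurs.
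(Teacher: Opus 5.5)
Your proposal is correct and is essentially the paper's argument. The paper uses the sheaf splitting $\pi_*\underline{\Q}_{X'}=\underline{\Q}_X\oplus L_\rho$, which is the topological form of your Shapiro decomposition, and then deduces $\rho\notin\cV^1(G,\Q)$ from Corollary~\ref{cor  char}, whereas you reach the same conclusion by a direct rank count of the Alexander matrix at $\chi$. As a side remark, Lemma~\ref{lem:no_cross_weight} is not actually needed on either route: complete multipartiteness alone forces every $x_i$ to be adjacent to $u$ or to $v$, so $u$ and $v$ lie in no common disconnected induced subgraph, and your rank count still reaches $n+1$.
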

\begin{proof}
We only need to show that $\pi^*$ is an isomorphism between rational vector spaces.
Note that 
      $\pi_*\underline{\Q}_{X'} = \underline{\Q}_X \oplus L_\rho$, where $L_\rho$ is a rank one $\Q$-local system corresponding to the representation $\rho$ of $G$ by sending $u,v$ to $-1$ and all other $x_i$ for $1\leq i\leq n$ to $1$. Since $G$ is quasi-projective, Lemma \ref{lem:multipartite} and Lemma \ref{lem:no_cross_weight} imply that all $x_i$'s are adjacent to both $u$ and $v$. Therefore  $\rho \notin \cV^1(G,\Q)$ by Corollary \ref{cor  char}. Hence $H^1(X,L_\rho) = 0$ and  it implies 
    $$H^1(X',\Q) \cong H^1(X, \pi_*\underline{\Q}_{X'}) \cong H^1(X,\Q) \oplus H^1(X,L_\rho) = H^1(X,\Q).$$
    Then the claim follows. 
\end{proof}

\begin{lemma} \label{lem Hodge} Let \(G=G_{\Gamma^\ell}\) be a quasi-projective weighted
right-angled Artin group, and suppose that \(\{u,v\}\) is an edge of $\Gamma_\ell$ with weight \(m>1\).
Then for  every smooth quasi-projective realization  $G=\pi_1(X)$,
 $$L_{u,v} = \operatorname{span}_{\Q}\{u^*, v^*\} \subseteq H^1(X,\Q)$$ has a pure Hodge structure of weight one.    
\end{lemma}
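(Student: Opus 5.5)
The approach is to combine Deligne's weight filtration on $H^1(X,\Q)$ with the cup product and the double cover $\pi\colon X'\to X$ of Lemma~\ref{lem iso}. Recall that $W_1H^1(X,\Q)$ is the image of $H^1(\bar X,\Q)$ for a good compactification $\bar X$, and that $\operatorname{Gr}^W_2H^1(X,\Q)$ is spanned by logarithmic (residue) classes. The cup product is a morphism of mixed Hodge structures $\wedge^2H^1\to H^2$. The goal is to show that $L_{u,v}\cap W_1$ is all of $L_{u,v}$. Equivalently, no nonzero element of $L_{u,v}$ survives in $\operatorname{Gr}^W_2$. The case $L_{u,v}\cap W_1=0$ is also allowed a priori and must be ruled out.

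\emph{Step 1.} Record the cohomological structure forced by Steps 1 and 2. By Lemma~\ref{lem:multipartite} and Lemma~\ref{lem:no_cross_weight}, each $x_i$ is joined to both $u$ and $v$ by an edge of weight one. By Remark~\ref{rem cup}, $u^*\cup v^*\neq0$, and the classes $u^*\cup x_i^*$, $v^*\cup x_i^*$, $x_i^*\cup x_j^*$ (for edges) are linearly independent from $u^*\cup v^*$. Consequently, a class $a\in L_{u,v}$ satisfies $a\cup b\ne0$ for all $b\in H^1$ outside a proper subspace. In particular, $a\cup(\text{the other element of }L_{u,v})\neq0$.

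\emph{Step 2.} Suppose $a\in L_{u,v}$ has nonzero image in $\operatorname{Gr}^W_2$. Here I plan to use the double cover together with positive-characteristic information, as in Step~2 of the paper. Over $\Q$ the weight $m$ is invisible, since the groups $G_{\Gamma_\ell}$ and $G_\Gamma$ have the same rational cohomology ring in low degrees. The weight is, however, visible in the deck involution $\sigma$ of $X'$. By Lemma~\ref{lem iso}, $\sigma^*$ acts trivially on $H^1(X',\Q)$, but it need not act trivially on $H^2(X',\Q)$. In $G'=\ker\alpha$, a Reidemeister--Schreier rewriting of $[u,v]^m$ produces two relators, interchanged by $\sigma$, of the form $[u,v]^m$ and $[v,u]^m$ conjugated by $u$. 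I will compute the cup product $\pi^*u^*\cup\pi^*v^*$ in $H^2(G',\Q)$ via Fox calculus, as in Remark~\ref{rem cup}, and compare it with the $\sigma$-anti-invariant part. The strategy is then to exploit Lemma~\ref{lem cover}, which identifies $\cR^1(G')$ with $\cR^1(G)$, together with Theorem~\ref{thm:DPS-isotropic} applied on $X'$. A weight-two class in $L_{u,v}$ would, through the residue description, force a $0$-isotropic resonance component of $G'$ containing it (Remark~\ref{rem isotropic}). This would contradict $u^*\cup v^*\neq0$ and the complete multipartite description of $\cR^1$ in Corollary~\ref{cor  char}, where $u$ and $v$ lie in distinct blocks.

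\emph{Step 3.} Conclude that $L_{u,v}\subseteq W_1$. Since $L_{u,v}$ is defined over $\Q$ and is a sub-MHS of $W_1$, it carries a pure Hodge structure of weight one.

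I expect the main obstacle to be Step~2. Its difficulty is in extracting a genuine contradiction from a weight-two component inside $L_{u,v}$, because a nonzero cup product alone does not exclude weight two in the quasi-projective setting. If the isotropicity route fails, I would instead try the $\sigma$-equivariant weight count in $H^2(X',\Q)$ directly. The point to exploit is that $\operatorname{Gr}^W_4$ of a product of two weight-two classes is controlled by residues along boundary divisors, and $\sigma$ must preserve these.
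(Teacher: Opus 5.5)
There is a genuine gap in your Step~2, and that step is where the lemma is actually proved. Your proposed mechanism is that a weight-two class in $L_{u,v}$ would, ``through the residue description'', force a $0$-isotropic resonance component of $G'$ containing it. This implication is not valid. Moreover, none of the tools you invoke uses the hypothesis $m>1$: the nonvanishing of $u^*\cup v^*$, Lemma~\ref{lem cover}, Theorem~\ref{thm:DPS-isotropic}, and the description of $\cR^1$ in Corollary~\ref{cor  char} all hold verbatim for $m=1$. For $m=1$ the conclusion fails: $G=\Z^2=Q_1$ is realized by $X=(\C^*)^2$, where $H^1(X,\Q)$ is pure of weight two, $u^*\cup v^*\neq0$, and $\cR^1(\Z^2,\C)=\{0\}$. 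The deck involution on $H^2(X',\Q)$ and positive characteristic are only named, never turned into an argument.

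The missing idea is that the weight becomes visible \emph{over $\C$} once you pass to $G'$. By Reidemeister--Schreier, $Q'=\ker(Q_m\to\Z/2)$ is isomorphic to $\pi_1^{\orb}$ of a genus-one curve with two orbifold points of multiplicity $m$. Hence $\ab(Q')\cong\Z^2\oplus\Z_m$, and $\cV^1(Q',\C)=\{\mathbf 1\}\cup\bigsqcup_{\lambda^m=1,\lambda\neq1}T_\lambda$ consists of $2$-dimensional \emph{translated} subtori. The paper pulls these back along $p'\colon G'\twoheadrightarrow Q'$ and shows that $\widehat{p'}(T_\lambda)$ is an irreducible component of $\cV^1(G',\C)$. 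Indeed, a strictly larger component would, possibly after a torsion translation, give a component through $\mathbf 1$. Its tangent cone would strictly contain $\pi^*(L_{u,v}\otimes\C)$ and lie in $\cR^1(G',\C)$, which $\pi^*$ identifies with $\cR^1(G,\C)$. This contradicts Corollary~\ref{cor  char}, because $u$ and $v$ are adjacent to each other and to every $x_i$. Theorem~\ref{thm:structure} then yields an orbifold fibration $f\colon X'\to C$ realizing this component, and the same resonance argument forces $C$ to be an elliptic curve.

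Your Step~3 is also insufficient as stated. Knowing $L_{u,v}\subseteq W_1H^1(X,\Q)$ does not make $L_{u,v}$ a sub-Hodge structure. A rational $2$-plane in a pure weight-one Hodge structure need not be compatible with the Hodge decomposition. Also, $L_{u,v}=p^*H^1(Q,\Q)$ comes from a group homomorphism that is not known to be induced by an algebraic map. Yet the sub-Hodge property is exactly what Lemma~\ref{lem:2edge} uses: a rational line inside $W_1$ is harmless, whereas a one-dimensional sub-Hodge structure of weight one is impossible.

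In the paper this property comes from the fibration. Lemma~\ref{lem from char to group} gives $f^*H^1(C,\Q)=\im(p')^*=\pi^*L_{u,v}$. So $\pi^*L_{u,v}$ is the image of the pure weight-one structure $H^1(C,\Q)$ under a morphism of mixed Hodge structures, and $\pi^*$ is an isomorphism of mixed Hodge structures by Lemma~\ref{lem iso}. The working route therefore goes through torsion characters of the double cover and an elliptic fibration, not through the weight filtration and cup products directly.
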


\begin{proof} We divide the proof to 3 steps. 

\medskip

\noindent Step A.  We start with a toy example.  Let $Q =Q_m= \langle u,v\mid [u,v]^m=1\rangle$ be the weighted right-angled Artin group associated with the graph $S_m$. Let $Q'$ be the kernel of the group homomorphism $Q \to \Z/2\Z$  sending both $u$ and $v$ to $1$. Then $Q'$ has the following presentation, via Reidemeister–Schreier rewriting process \cite[Theorem~2.1]{Fox}:
$$ Q' = \langle \widetilde{u}, a, b | (b \widetilde u^{-1} a ^ {-1}) ^m, (\widetilde u a b ^{-1} ) ^m \rangle, $$ with $\widetilde u=u^2,  a=vu^{-1}, b=uv.$

Note that both $Q$ and $Q'$ are isomorphic to some compact orbifold groups. 
In fact, $Q$ is isomorphic to the compact orbifold group with genus 1 and one marked point of multiplicity $m$: $$\pi_1^\orb(C,m)=\langle x,y, z| z^m, xyx^{-1}y^{-1}z \rangle .$$ 
Meanwhile, $Q'$ is isomorphic to the compact orbifold group with genus 1 and two marked points both of multiplicity $m$: $$\pi_1^\orb(C,(m,m))=\langle x, y, z, w| z^m, w^m, xyx^{-1}y^{-1}zw \rangle .$$
A group isomorphism  $\phi \colon \pi_1^\orb(C,(m,m)) \to Q' $ is given by 
\begin{equation*}
        \phi(x)=a, \phi(y)=\widetilde{u}, \phi(z)=\widetilde{u}ab^{-1},\phi(w)=b \widetilde u^{-1} a ^ {-1}.
\end{equation*}

It is easy to see that $\ab(Q')\cong \Z^2\oplus \Z_m$. Hence $\Char(Q',\C)$ consists of $m$ many connected parallel torus. Write its connected character components as $T_\lambda$, indexed by $\lambda^m=1$, with $T_1$ the identity
component.   By Proposition \ref{prop orbifold group}(1), we have
\begin{equation} \label{toy}
    \cV^1(Q',\C)= \{\mathbf{1}\} \cup \bigsqcup_{\lambda^m=1, \lambda\neq 1} T_\lambda .
\end{equation}
Each $T_\lambda$ has dimension two.

As in Lemma \ref{lem iso}, let $X $ and $X'$ be the complex smooth quasi-projective varieties such that $\pi_1(X)=Q $ and $\pi_1(X')=Q' $. 
Then comparing \Cref{thm:structure} with the computations of $\cV^1(Q',\C)$ in (\ref{toy}), there exists a surjective orbifold fibration $f\colon X' \to C$ with $b_1(C)=2$.  The base $C$ must be compact: otherwise Proposition~\ref{prop orbifold group}(2) would then pull back its entire identity character component into $\cV^1(Q',\C)$, contradicting the isolated identity in (\ref{toy}). Hence $C$ is an elliptic curve. Then the induced map $f^* \colon H^1(C,\Q) \to H^1(X',\Q)$ is injective. Since $\ab(Q') $ has rank 2, $f^*$ induces an isomorphism of mixed Hodge structures. Together with the isomorphism induced by the covering map in Lemma \ref{lem iso}, 
we get the following isomorphisms of mixed Hodge structures $$  L_{u,v}  = H^1(X,\Q) \cong H^1(X',\Q)\cong H^1(C,\Q).$$
Then the claim for this special case follows. 

\medskip

\noindent Step B.  Consider the double cover $\pi\colon X' \to X$ with $\pi_1(X)\cong G$ and $\pi_1(X')\cong G'$ as in Lemma \ref{lem iso}.
Let $\pi_* \colon G'\to G$ denote the induced inclusion map.  Consider the natural projection $p\colon G \to Q=\langle u,v \mid [u,v]^m\rangle$, which sends all the generators $\{x_1, \cdots, x_n\}$ to the identity element and preserves $u $ and $v$. It is clear that the map $ \alpha$ (used to construct $G'$) factors through $p$.   
We have the following commutative diagram:
\[
\begin{tikzcd}
G' \ar[r, "\pi_*"] \ar[d, "p'"] & G \ar[d, "p"] \\
Q' \ar[r, "\pi'_*"] & Q
\end{tikzcd}
\]
It induces the following commutative diagram for $H^1(-,\Q)$:
\[
\begin{tikzcd}
H^1(G',\Q) & H^1(G,\Q)  \ar[l, "\pi^*"]   \\
H^1(Q',\Q) \ar[u, "(p')^*"] & H^1(Q,\Q) \ar[u, "p^*"]\ar[l, "(\pi')^*"]
\end{tikzcd}
\]
Note that the two horizontal maps are both isomorphisms by Lemma \ref{lem iso} and the two vertical maps are both injective. 
In particular, $L_{u,v}= \im p^*  $ is isomorphic to $\im (p')^* $ via $\pi^*$.
Since $\pi^*$ is an isomorphism of mixed Hodge structures, the proof is reduced to show that $\im (p')^*$ has  pure Hodge structure of weight one.

\medskip

\noindent Step C.  By Lemma \ref{lem: funct}, the surjection $p'\colon G'\to Q'$ induces an embedding $$\widehat{p'} \colon \cV^1(Q',\C) \hookrightarrow  \cV^1(G',\C).$$
By (\ref{toy}), for a fixed $\lambda$ with $\lambda^m=1$ and $\lambda\neq 1$, we have $\widehat{p'} (T_\lambda) \subseteq \cV^1(G',\C)$. We claim that  $\widehat{p'}(T_\lambda)$ is indeed an irreducible component of $\cV^1(G',\C)$.
Otherwise, there exists an irreducible component $V$ of  $\cV^1(G',\C)$ such that $ \widehat{p'}(T_\lambda) \subsetneq V$. Since $T_\lambda$ has dimension 2, $\dim_\C V\geq 3$. By  Proposition~\ref{prop:orb} and \Cref{thm:structure},
 either $V$ contains the identity element or $V$ has a parallel component $V'$, which is also an irreducible component of $\cV^1(G',\C)$, containing the identity element. This is precisely where $\dim V\geq 3$ is used. Assume the latter case holds and the proof proceeds with no difference in the former case. By the tangent cone formula in Theorem \ref{tangent cone theorem}, we have
 $$ \TC_1 V' \subseteq \TC_1 \cV^1(G',\C) \subseteq \cR^1(G',\C).$$
 On the other hand, by Lemma \ref{lem iso}, the induced map $\pi^*\colon H^1(G,\C) \to H^1(G',\C)$ is an isomorphism.  Then Lemma \ref{lem cover} implies that this linear isomorphism  identifies $ \cR^1(G,\C) $ with $\cR^1(G',\C),$ hence $(\pi^*)^{-1}(\TC_1 V')\subseteq \cR^1(G,\C) $. Then by Lemma \ref{lem from char to H^1} we get that $$L_{u,v}\otimes_\Q \C  \subsetneq (\pi^*)^{-1}(\TC_1 V')\subseteq \cR^1(G,\C) ,$$ since $ \widehat{p'} (T_\lambda) \subsetneq V.$  But  Lemma \ref{lem:multipartite} and Lemma \ref{lem:no_cross_weight} imply that all $x_i$'s are adjacent to both $u$ and $v$. Then
Corollary \ref{cor  char} shows that $L_{u,v}\otimes_\Q \C \nsubseteq \mathcal{R}^1(X,\C)$, which gives a contradiction.

Next by \Cref{thm:structure}, $\widehat{p'} (T_\lambda)$ being an irreducible component of $\cV^1(G',\C)$ implies that 
 there exists an algebraic map $f\colon X' \to C$  such that 
$\widehat{p'} (T_\lambda)$is one of connected components of $f^* \cV^1(\pi_1^\orb(C,\bar m), \C)$. 
Again $C$ is compact for the same reason as in Step A. Hence $C$ is an elliptic curve. 
Note that by taking a torsion translate, we have $\widehat{p'} \Char^0(Q',\C)=f^*(\Char^0(\pi_1^\orb(C,\bar m), \C))$. Then by Lemma \ref{lem from char to group}, 
 the image of the induced injective map $f^*\colon H^1(C,\Q) \to H^1(X',\Q)  $ coincides with the image of $(p')^* \colon H^1(Q',\Q) \to H^1(G',\Q)$.   
Hence the claim follows, since $f$ is algebraic and $H^1(C,\Q)$ has pure Hodge structure of weight one. 
\end{proof}

Now we are ready to finish step 3.
\begin{lemma}\label{lem:2edge}
If two heavy edges \(\{u,v\}\) and \(\{u,w\}\) share the same vertex \(u\), then
\(G_{\Gamma^\ell}\) is not quasi-projective.
\end{lemma}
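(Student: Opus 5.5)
By contradiction: assume $G=G_{\Gamma_\ell}=\pi_1(X)$ for a smooth quasi-projective $X$, with heavy edges $\{u,v\}$ and $\{u,w\}$, $v\neq w$. The aim is to find a positive dimensional component of $\cV^1(G,\C)$ through the identity whose tangent space is pure of weight two, and which contains a line from $L_{u,v}$ or $L_{u,w}$; Lemma \ref{lem Hodge} says those lines are pure of weight one.

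First I use the graph structure. By Lemma \ref{lem:multipartite}, $\Gamma$ is complete multipartite. By Lemma \ref{lem:no_cross_weight}, a heavy edge cannot meet a block of size at least two, so $u$, $v$, $w$ each form singleton blocks. In particular $v$ and $w$ are adjacent, so $\{u,v,w\}$ spans a triangle.

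The Hodge input is Lemma \ref{lem Hodge}, applied to both heavy edges. It shows that $L_{u,v}=\operatorname{span}_\Q\{u^*,v^*\}$ and $L_{u,w}=\operatorname{span}_\Q\{u^*,w^*\}$ are pure of weight one in $H^1(X,\Q)$. Hence $u^*$, $v^*$, $w^*$ all lie in $W_1H^1(X,\Q)$, and $\operatorname{span}\{u^*,v^*,w^*\}$ is a weight-one sub-Hodge structure of dimension three. This alone is not contradictory, so I need an odd-dimensionality or isotropy obstruction.

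For that I go back to the proof of Lemma \ref{lem Hodge}. In Step C, the component $\widehat{p'}(T_\lambda)$ comes from an orbifold fibration $f\colon X'\to E$ onto an elliptic curve with $f^*H^1(E,\Q)=\im(p')^*\cong L_{u,v}$. In the same way, the double cover attached to $\{u,w\}$ gives an elliptic fibration realizing $L_{u,w}$. The cleanest route seems to be the \emph{sub-Hodge-structure} argument. The weight-one part of $H^1(X,\Q)$ carries the polarization-type form coming from $H^1(\overline X)$, where $\overline X$ is a good compactification. Through the pullback from the elliptic curve, the cup product $u^*\cup v^*$ is nonzero (Remark \ref{rem cup}), and the same holds for $u^*\cup w^*$ and $v^*\cup w^*$.

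Concretely, I would consider the restriction to the subgroup $P=G_{\{u,v,w\}}$, which is a retract of $G$ via the natural projection. The three classes $u^*\cup v^*$, $u^*\cup w^*$, $v^*\cup w^*$ are linearly independent in $H^2(G,\Q)$ by Remark \ref{rem cup}. The weight-one parts pull back from $H^1(\overline X)$, and the cup product $W_1\otimes W_1\to W_2H^2(X)$ factors through $H^2(\overline X)$. Using the elliptic fibration $f\colon X'\to E$, which extends after resolving to $\overline{X'}\to E$, the classes $\pi^*u^*$ and $\pi^*v^*$ are pulled back from the curve $E$. The weight-one classes coming from the Albanese of $\overline X$ then split as a sum of the images of elliptic curves $E_{uv}$ and $E_{uw}$ inside the Albanese. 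The vector $u^*$ lies in both two-dimensional sub-Hodge structures $L_{u,v}$ and $L_{u,w}$, so $L_{u,v}\cap L_{u,w}$ is a one-dimensional sub-Hodge structure of weight one. This is impossible, since weight-one pure Hodge structures have even dimension. That is the contradiction.

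The main obstacle is justifying that $L_{u,v}$ and $L_{u,w}$ are genuinely sub-Hodge structures and not just subspaces whose Hodge structure is pure. Lemma \ref{lem Hodge} obtains $L_{u,v}$ as the image of the morphism of mixed Hodge structures $p^*$, identified via $\pi^*$ and $f^*$, so I would reread Steps B and C to confirm that $L_{u,v}=(\pi^*)^{-1}f^*H^1(E,\Q)$ is the image of a morphism of mixed Hodge structures. Once that holds, the intersection of two sub-MHS is a sub-MHS, pure of weight one, of dimension one, which is a contradiction. Checking that $\dim(L_{u,v}\cap L_{u,w})=1$ is immediate because $u^*$, $v^*$, $w^*$ are independent.
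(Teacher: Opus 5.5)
Your final argument is exactly the paper's proof: $L_{u,v}$ and $L_{u,w}$ are sub-mixed Hodge structures of $H^1(X,\Q)$, pure of weight one by Lemma \ref{lem Hodge}, so their intersection $\Q\langle u^*\rangle$ would be a one-dimensional pure weight-one Hodge structure. The sub-MHS point you flag is precisely what Steps B and C supply, since each $L$ is $(\pi^*)^{-1}$ of the image of $f^*$ from an elliptic curve. The detours through weight-two tangent spaces, cup products and the Albanese are unnecessary. Your remark that the three-dimensional weight-one sub-Hodge structure $L_{u,v}+L_{u,w}$ ``is not contradictory'' is wrong: it already violates the same even-dimension constraint.
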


\begin{proof}
Suppose  that $G_{\Gamma^\ell}=\pi_1(X)$
 for some smooth quasi-projective variety \(X\). By Lemma \ref{lem Hodge}, both $L_{u,v}$ and $L_{u,w}$ have a pure Hodge structure of weight one. Then
their intersection 
\[
L_{u,v}\cap L_{u,w} = \mathbb Q\langle u^*\rangle
\]
is a one-dimensional rational pure Hodge structure of weight one, which is impossible.
\end{proof}

\subsection{The classification and its consequences}
\begin{proof}[Proof of Theorem \ref{thm:main}]
    It is clear that (ii) $\Leftrightarrow$ (iii).
    
    (i) $\Rightarrow$ (iii): 
    Lemma \ref{lem:multipartite} makes the underlying graph complete multipartite. Lemma \ref{lem:no_cross_weight} shows that $\Gamma_\ell$ is a join of some discrete graphs with at least two vertices and a complete graph $K$ (possibly with weighted edges). Finally, Lemma \ref{lem:2edge} proves that within $K$,  two heavy edges can not share a vertex. Thus, $K$ is  a finite join of either $S_m$ or a graph consisting of a single vertex. 

    (ii) $\Rightarrow$ (i): It is clear that the free group $\F_r$ (with $r\geq 1$) is the fundamental group of the variety $\C \setminus \{r \text{ points}\}$. On the other hand, the group  $ \langle a, b \mid [a,b]^m = 1 \rangle$ is indeed a projective group, see e.g. \cite[Section~5.3]{BM}. Since the graph is a join of $D_r$'s and $S_m$'s, the overall group is the direct product of their respective groups. The product of these varieties is smooth
quasi-projective, with the required fundamental group, see Proposition \ref{prop property}(2).
\end{proof}

As an application, we classify the weighted right-angled Artin groups realized by some smooth quasi-projective variety $X$ having $H^1(X,\Q)$ of pure Hodge structure of weight two, such as the complement of plane curves in $\mathbb{P}^2$ or a hyperplane arrangement complement.

\begin{corollary} \label{cor arrangement}
        Let $G=G_{\Gamma_\ell}$ be a weighted right-angled Artin group. The following are equivalent:
       \begin{enumerate}[label=(\roman*)]
        \item $G$ is the fundamental group of a complex hyperplane arrangement complement;
        \item  $G\cong \pi_1(X)$ for a complex smooth quasi-projective variety $X$ with $W_1(H^1(X,\Q)) = 0$;
        \item $G$ is a finite product of finitely generated free groups;
        \item $\Gamma_\ell$ is complete multipartite and all edge weights are  one.
    \end{enumerate}
\end{corollary}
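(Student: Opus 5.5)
The plan is to prove the cycle (i) $\Rightarrow$ (ii) $\Rightarrow$ (iii) $\Leftrightarrow$ (iv) $\Rightarrow$ (i).

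\emph{(i) $\Rightarrow$ (ii).} A hyperplane arrangement complement $X$ is smooth quasi-projective. By Brieskorn's theorem, $H^1(X,\Q)$ is spanned by the classes of the logarithmic forms $d\log \alpha_H$. It is therefore pure of weight two, so $W_1(H^1(X,\Q))=0$.

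\emph{(iii) $\Leftrightarrow$ (iv).} This is immediate from \Cref{ex:join}. A join of discrete graphs with all weights one gives a product of free groups. Conversely, by \Cref{thm:main}(iii), a product of free groups corresponds to a join of copies of $D_r$'s and $S_m$'s. Since $Q_m$ is not free (its abelianization has rank two but the cup product $u^*\cup v^*\neq 0$ by Remark~\ref{rem cup}), any $S_m$ factor must be $S_1=D_1*D_1$. I would phrase the converse directly through the graph: each $S_m$ with $m>1$ is a heavy edge, which (iv) forbids.

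\emph{(iv) $\Rightarrow$ (i).} The free group $\F_r$ is the fundamental group of the complement of $r$ parallel lines in $\C^2$, or equivalently of $r$ points in $\C$. A product of arrangement complements is the complement of the product arrangement, which is again a hyperplane arrangement complement. Hence a finite product of free groups is an arrangement group. For the free group $\F_1=\Z$ one uses $\C^*$.

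\emph{(ii) $\Rightarrow$ (iii)/(iv).} This is the main step. By \Cref{thm:main}, $\Gamma_\ell$ is a join of $D_r$'s and $S_m$'s, and it remains to rule out an $S_m$ with $m>1$. Suppose $\{u,v\}$ is such a heavy edge. By Lemma~\ref{lem Hodge}, $L_{u,v}=\operatorname{span}_\Q\{u^*,v^*\}\subseteq H^1(X,\Q)$ carries a pure Hodge structure of weight one, so it is a nonzero sub-Hodge structure contained in $W_1(H^1(X,\Q))$. This contradicts $W_1=0$. The hypotheses of Lemma~\ref{lem Hodge} only require $G$ quasi-projective with a heavy edge, which holds here. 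Hence all weights are one, and by Lemma~\ref{lem:multipartite} (equivalently \Cref{thm:DPS}) the underlying graph is complete multipartite, giving (iv) and therefore (iii).

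The only subtle point is confirming that "pure of weight one" in Lemma~\ref{lem Hodge} means $L_{u,v}$ lies in $W_1$. The weights on $H^1$ of a smooth variety are $1$ and $2$, and a pure weight-one sub-MHS maps to zero in $\mathrm{Gr}^W_2$, so it does lie in $W_1$. The argument thus reduces cleanly to Lemma~\ref{lem Hodge}.
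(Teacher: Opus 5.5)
Your proof is correct and is essentially the paper's: the key implication (ii) $\Rightarrow$ (iv) is the same argument, where Lemma~\ref{lem Hodge} makes $L_{u,v}$ a weight-one sub-Hodge structure contained in $W_1H^1(X,\Q)=0$ and Lemma~\ref{lem:multipartite} gives the multipartite structure, and your other implications match the paper's. Your standalone argument for (iii) $\Rightarrow$ (iv) does not work as written, because $u^*\cup v^*\neq 0$ also shows $Q_1=\Z^2$ is not free even though it \emph{is} a product of free groups, and your ``graph'' rephrasing assumes (iv); but that step is redundant, since your (iv) $\Rightarrow$ (i) paragraph really proves (iii) $\Rightarrow$ (i), so (iii) $\Rightarrow$ (i) $\Rightarrow$ (ii) $\Rightarrow$ (iv) closes the cycle exactly as in the paper.
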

\begin{proof}
  It is clear that   $(i)\Rightarrow (ii)$ and $ (iii) \iff (iv) $. 
  
    For  $(iii)\Rightarrow$ (i), the finitely generated free group $\F_r$ is the fundamental group of $\C \setminus \{r \text{ points}\}$, hence the fundamental group of hyperplane arrangement complement. A finite product of hyperplane arrangement complements is still a hyperplane arrangement complement. Hence the claim follows.

It remains to prove $(ii) \Rightarrow (iii)$. By Lemma \ref{lem:multipartite}, the underlying graph \(\Gamma\) is complete multipartite. It suffices to
show that all edge weights are equal to \(1\).
Suppose that \(\{u,v\}\) is a heavy edge. By Lemma \ref{lem Hodge}, $L_{u,v}\subseteq H^1(X,\Q)$ has a pure Hodge structure of weight one. This is impossible since we assumed that $W_1 H^1(X,\Q)=0$.  
\end{proof}

\begin{corollary} \label{cor projective}
        Let $G=G_{\Gamma_\ell}$ be a weighted right-angled Artin group. The following are equivalent.
      \begin{enumerate}[label=(\roman*)]
        \item  $G$ is a projective group;
        \item $G$ is the product of groups of type  $Q_m$ for some $m\geq 1$
        \item $\Gamma_\ell$ is a finite join of graphs of type $S_{m}$.
    \end{enumerate}
\end{corollary}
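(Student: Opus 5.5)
The plan is to derive this from \Cref{thm:main}, with two extra inputs: the parity of $b_1$ for projective varieties, and the isotropicity constraint in Remark~\ref{rem isotropic}. The equivalence (ii) $\Leftrightarrow$ (iii) is immediate from Example~\ref{ex:join}, since $G_{S_m}=Q_m$ and joins correspond to direct products. For (ii) $\Rightarrow$ (i), I use that each $Q_m$ is a projective group (as cited from \cite[Section~5.3]{BM} in the proof of \Cref{thm:main}). By Proposition~\ref{prop property}(2), finite products of projective groups are projective. Note that $Q_1=\Z^2$ is the fundamental group of an elliptic curve.

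For (i) $\Rightarrow$ (iii), a projective group is in particular quasi-projective. So by \Cref{thm:main}, $\Gamma_\ell$ is a finite join of copies of $D_r$ ($r\ge1$) and $S_m$ ($m\ge1$). I must rule out every factor $D_r$, and I split into two cases.

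\emph{Case $r\geq 2$.} Here $D_r$ is a maximal disconnected induced subgraph of the join, so by Corollary~\ref{cor char} the subtorus $T_{D_r}$ is a positive-dimensional component of $\cV^1(G,\C)$ through $\mathbf 1$. Its tangent space is $L_{D_r}$, and the cup product on $L_{D_r}$ vanishes by Remark~\ref{rem cup}, because $D_r$ has no edges. On the other hand, Remark~\ref{rem isotropic} says that for $X$ projective every such $L$ is $1$-isotropic, so the cup product on $L$ has one-dimensional image. This contradicts the vanishing. Equivalently, by \Cref{thm:structure} the component comes from a fibration onto a compact curve, and then \cite[Thm~6.2]{DPS} forces the generic dimension of $H^1$ to be even-genus-controlled. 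Either formulation gives the contradiction.

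\emph{Case $r=1$.} Write $G=\Z^{k}\times\prod_j Q_{m_j}$, where $k$ counts the factors $D_1$; I must show $k=0$. First, if $k\ge 1$ and some $m_j=1$, or if $k\ge2$, then pairing copies of $\Z$ regroups them as $Q_1$ factors. So in the graph description I may absorb pairs of single vertices into $S_1$ factors, since $D_1*D_1=S_1$. After this, at most one $D_1$ remains. If exactly one remains, $b_1(G)=1+2\cdot\#\{S_m\}$ is odd. This contradicts the evenness of $b_1$ of a smooth projective variety, which holds by Hodge theory. Hence no $D_1$ remains, and $\Gamma_\ell$ is a join of graphs $S_m$.

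I expect the main obstacle to be the case $r\ge2$, where one must check carefully that the $1$-isotropicity statement applies to the component $T_{D_r}$. If that application fails, the fallback is a direct argument using \Cref{thm:structure}: the component comes from a fibration onto a compact curve $C$ of genus $g\ge2$. Its pullback $H^1(C)$ equals $L_{D_r}$ by Lemma~\ref{lem from char to group}. Then $L_{D_r}$ would carry the nondegenerate cup product pulled back from $C$, since pullback of the fundamental class is nonzero in $H^2$ for a surjective map from a projective variety. This again contradicts the vanishing on $L_{D_r}$.
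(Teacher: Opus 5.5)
Your proposal is correct and follows essentially the same route as the paper: a factor $D_r$ with $r\ge 2$ is excluded because $L_{D_r}$ is $0$-isotropic while Remark~\ref{rem isotropic} forces $1$-isotropy for projective $X$, and the remaining $\Z$ factors are paired into copies of $Q_1=\Z^2$, with the parity of $b_1$ excluding a leftover one. The clause ``if $k\ge1$ and some $m_j=1$'' in your $r=1$ case does nothing and can be dropped, since the parity argument alone settles that case; because you prove (iii)$\Rightarrow$(ii)$\Rightarrow$(i)$\Rightarrow$(iii), the equivalences close up.
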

\begin{proof}
  It is clear that    $ (ii) \iff (iii) $. Note that $ Q_m$ is a projective group, see e. g. \cite[Section~5.3]{BM}. Hence $(ii)\Rightarrow (i)$. 

It remains to prove $(i) \Rightarrow (ii)$. Note that $G$ is a projective group, hence quasi-projective. By \Cref{thm:main}, $G$ is the product of groups of type $\F_r$  or $ Q_m$. If one of the product factor is $\F_r$ with $r>1$, which corresponds that $\Gamma_\ell$ has a maximal discrete subgraph $\Gamma(W)$ with $r$ vertices, then  $T_W$ is a irreducible component of $\cV^1(G,\C)$. In particular, $L_W$ is 0-isotropic. But this contradicts Remark \ref{rem isotropic}. Hence  $G_{\Gamma^{\ell}}
\cong
\Z^n
\times
\prod_{\beta}
\left\langle
a_{\beta},b_{\beta}
\ \middle|\
[a_{\beta},b_{\beta}]^{m_{\beta}}=1
\right\rangle$. 
  Since $G$ is projective, $n$ is even. Note that $\Z^2=\langle a,b | [a,b]=1 \rangle=Q_1$. The claim follows.   
\end{proof}

\begin{remark}\label{rem:kahler}
The same method gives an alternative proof of the K\"ahler classification for weighted right-angled Artin groups in \cite[Theorem 1.6]{LL}. 
Both Theorem \ref{thm:structure} and Theorem \ref{thm:DPS-isotropic} apply to compact K\"ahler manifold, see e. g. \cite[Theorem 9.3]{Suc14} and \cite[Theorem C]{DPS}. 
Then Lemma \ref{lem:multipartite} and Lemma \ref{lem Hodge} still apply, and Step 2 is unnecessary. Hence the classification follows by the same proof as for Corollary \ref{cor projective}. Thus, within this class, the K\"ahler and projective groups coincide.
\end{remark}

\end{document}